\numberwithin{equation}{section}
\newtheorem{theorem}{Theorem}[section]
\newtheorem{lemma}[theorem]{Lemma}
\newtheorem{definition}[theorem]{Definition}
\newtheorem{corollary}[theorem]{Corollary}
\newtheorem{assumption}{Assumption}
\newcommand\be{\begin{equation}}
\newcommand\ee{\end{equation}}
\renewcommand{\subset}{\subseteq}
\renewcommand{\d}{\,\text{\rmfamily{}\upshape{}d}}
\newcommand{\dx}{\,\text{\rmfamily{}\upshape{}d}x}
\def\N{\mathbb  N}
\def\R{\mathbb  R}
\def\gph{\operatorname{gph}}
\def\supp{\operatorname{supp}}
\def\dist{\operatorname{dist}}
\def\diag{\operatorname{diag}}
\def\Otwo{\operatorname{O}(2)}
\def\SOtwo{\operatorname{SO}(2)}
\newcolumntype{L}{>{$}l<{$\quad}}
\newcolumntype{R}{>{$}r<{$\quad}}
\newcolumntype{C}{>{$}c<{$}}
\begin{document}

\title{Control in the coefficients of an elliptic differential operator: topological derivatives and Pontryagin maximum principle}

\author{Daniel Wachsmuth%
\thanks{Institut f\"ur Mathematik,
Universit\"at W\"urzburg,
97074 W\"urzburg, Germany, {\tt daniel.wachsmuth@mathematik.uni-wuerzburg.de}.
This research was partially supported by the German Research Foundation DFG under project grant Wa 3626/5-1.}}

\maketitle

{\bfseries Abstract.}
We consider optimal control problems, where the control appears in the main part of the operator.
We derive the Pontryagin maximum principle as a necessary optimality condition.
The proof uses the concept of topological derivatives. In contrast to earlier works,
we do not need continuity assumptions for the coefficient or gradients of solutions of partial differential equations.
Following classical proofs, we consider perturbations of optimal controls by multiples of characteristic functions of sets,
whose scaling factor is send to zero.
For $2d$ problems, we can perform an optimization over the elliptic shapes of such sets leading
to stronger optimality conditions involving a variational inequality of a new type.

\bigskip

{\bfseries Keywords. } Optimal control, control in the coefficients, Pontryagin maximum principle, topological derivatives

\bigskip

{\bfseries MSC (2020) classification. }
49K20, 
35J15  
\section{Introduction}

In this article, we are interested in proving the Pontryagin maximum principle
maximum principle for the following problem:
Minimize
\begin{equation}\label{eq_001}
J(y,a):=\frac12 \int_\Omega (y(x)-y_d(x))^2 \dx + \int_\Omega g( a(x)) \dx
\end{equation}
over all
\begin{equation}\label{eq_002}
 a \in  \mathcal A \subset L^\infty(\Omega; \R^{d,d}),
\end{equation}
where $y\in H^1_0(\Omega)$ is the weak solution of
\begin{equation}\label{eq_003}
-\operatorname{div} (a \nabla y) = f \text{ a.e. in } \Omega.
\end{equation}
Hence, the optimization variable is the coefficient in the main part of the differential operator.
In this problem, $\Omega \subset \R^d$ is a bounded domain, $f\in H^{-1}(\Omega)$ is a given source term, $y_d \in L^2(\Omega)$
is the desired state, while $g: \R^{d,d} \to \R \cup \{+\infty\}$ models the cost of choosing a certain coefficient matrix.
In addition, $\mathcal A$ is a feasible set, that contains matrices with uniformly positive definite symmetric part.
For the precise statement of the assumptions, we refer to \cref{sec_setup}.

Problem \eqref{eq_001}--\eqref{eq_003} is a classical problem, and lead to the study of H-convergence, \cite{MuratTartar1997}.
One cannot prove existence of solutions, and examples without solutions can be found, e.g., in \cite{Allaire2002,Murat1977}.

Here, we are interested in proving the Pontryagin maximum principle,
which is a classical necessary optimality condition in optimal control theory.
Let $a$ be a solution of the problem above. Then we consider a perturbation of the type
\begin{equation}\label{eq_intro_perturbation}
 a_r := a + \chi_{r \omega} (b-a),
\end{equation}
where $b \in \R^{d,d}$ is a constant matrix, $\omega$ is the unit ball in $\R^d$.
And we are interested in passing to the limit in the difference quotient
\begin{equation}\label{eq_intro_dq}
 \frac1{ r^d } ( J(y_r,a_r) - J(y,a)),
 \end{equation}
where $y_r$ is the solution to the elliptic equation with coefficient $a_r$.
For the state-dependent part of the cost functional $J$, we have the following expansion
\begin{multline*}
\frac12 \|y_r-y_d\|_{L^2(\Omega)}^2 - \frac12 \|y-y_d\|_{L^2(\Omega)}^2 \\
= - \int_\Omega (a_r-a) \nabla y \cdot\nabla p \dx  - \int_\Omega (a_r-a) \nabla y\cdot \nabla (\tilde p_r-p)\dx,
\end{multline*}
where $p$ and $\tilde p_r$ are certain adjoint states.
The first term in the expansion represents the Fr\'echet derivative of the map $a\mapsto y$ from $L^\infty(\Omega)$ to $H^1_0(\Omega)$,
while the second term is of higher-order in $\|a_r-a\|_{L^\infty(\Omega)}$.
Due to the choice of the perturbation, we do not get that $\|a_r-a\|_{L^\infty(\Omega)} \to 0$ for $r\to0$.
And the second term in the expansion does not vanish when passing to the limit in the difference quotient \eqref{eq_intro_dq}.

As one would expect, we will encounter topological derivatives of solutions of the elliptic partial differential equations.
In contrast to earlier works, we prove the corresponding results
under much weaker assumptions than in the literature:
\begin{enumerate}
 \item The coefficient function $a$ is assumed to belong to $L^\infty(\Omega; \R^{d,d})$ such that $-\operatorname{div}(a\nabla \cdot)$
 is a uniformly elliptic operator. We do not assume that $a$ is piecewise constant \cite{Allaire2002,AmmariKang2007,Amstutz2021}
 or continuous \cite{GanglSturm2020},
 \item We work with weak solutions in $H^1$. We do not assume that the weak solutions $y$ or their gradients $\nabla y$ are continuous.

 \item That is, our proof works under the same set of assumptions than the Lax-Milgram theorem, and the proof only uses regularity of solutions
 as provided by Lax-Milgram.
\end{enumerate}
The assumption of piecewise constant coefficients $a$ makes sense in material or topology optimization.
However, this assumption is too restrictive for the optimization problem \eqref{eq_001}--\eqref{eq_003}.
Our proof follows the developments of \cite{Amstutz2021,GanglSturm2020,Sturm2020_semilinear}. The main improvement compared to these earlier works
is the consequent use of the celebrated Lebesgue differentiation theorem, which allows to dispense with continuity assumptions.
This derivation is done in \cref{sec_sensitivity} with the main result being the asymptotic expansion of the cost functional in \cref{thm_exp_general}.

We interested in proving necessary optimality conditions for \eqref{eq_001}--\eqref{eq_003}.
Here, we need to specify the notion of local solutions.

\begin{definition}
A coefficient function $a$ is called locally optimal in $L^p(\Omega)$ for \eqref{eq_001}--\eqref{eq_003}, $p \in [1,\+\infty]$,  with associated state $y$, if there
is $\rho>0$ such that $J(y,a) \le J( \tilde y,\tilde a)$ for all feasible coefficients $\tilde a$ with $\|a-\tilde a\|_{L^p(\Omega)} <\rho$
and associated state $\tilde y$.
\end{definition}

If the feasible set $\mathcal A$ is bounded in $L^\infty(\Omega)$, then local optimality in $L^p(\Omega)$ implies local optimality in $L^q(\Omega)$ for all $1\le p < q \le \infty$.

We are mainly interested in the case $p=1$.
Here, we obtain the necessary optimality conditions in form of the Pontryagin maximum principle.
Let $a$ be (locally) optimal in $L^1(\Omega)$.
Then the Pontryagin maximum principle is satisfied in the following way:
for almost all $x_0 \in \Omega$ and all feasible perturbations $b\in \R^{d,d}$ we have
\[\begin{split}
 0 \le
 -(b-a(x_0)) \nabla y(x_0) \cdot \left(\nabla p(x_0) + \frac1{|\omega|} \int_\omega \nabla_{x'} Q \dx' \right)
 + g(b) - g(a(x_0)).
\end{split}\]
Here, $Q$ is a solution of a certain adjoint equation on $\R^d$, which depends on $a(x_0)$, $b$, and the set $\omega$, which is used
in the perturbation \eqref{eq_intro_perturbation}.
The precise statement can be found in \cref{thm_pmp}.

If $a$ would only be locally optimal in $L^\infty(\Omega)$
then one could use Fr\'echet derivatives, and one would obtain a similar inequality but with $Q \equiv 0$.
For certain shapes $\omega$ (balls in $\R^d$, ellipses in $\R^2$), explicit formulas for $Q$ are available.
Similar results can be found in the work of Raitums \cite{Raitum1984a,Raitum1984b,Raitum1986,Raitum1989},
which deserves to be better known, but which seems to be only available in Russian.

In the special case that the coefficient $a$ is scalar and $d=2$, one can optimize the above formula for elliptic shapes $\omega$
to obtain the following strengthened version
 \begin{multline*}
-(b-a(x_0)) \nabla y(x_0) \cdot \nabla p(x_0)+  g(b) - g(a(x_0)) \\
+ \frac12 \frac{ (b-a(x_0))^2}b \left(   \nabla y(x_0) \cdot \nabla p(x_0)- \|\nabla y(x_0)\|_2\|\nabla p(x_0)\|_2\right)
 \ge 0.
 \end{multline*}
A related result can be found in \cite{Amstutz2011b} for the study of a material optimization problem, where $a$ is allowed to take
only two different values.
These inequalities are stronger than the related inequalities one gets using Fr\'echet derivatives in $L^\infty(\Omega)$.

The plan of the paper is as follows. The sensitivity analysis of the cost functional $J$ with respect
to perturbations of the coefficient is performed in \cref{sec_sensitivity}, where the main result is \cref{thm_exp_general}.
The special cases of perturbations with characteristic functions of balls and ellipses are considered in \cref{sec_specific_perturbations}.
These results are applied in \cref{sec_pmp} to an optimal control problem with control in the coefficients.

\paragraph{Notation}

Given $v\in \R^d$, we denote its Euclidean norm by $\|v\|_2$.
The set $B(x,r)\subset\R^d$ is the open ball centered at $x$ with radius $r$.
The characteristic function of a set $A\subset \R^d$ is denoted by $\chi_A$.
The Lebesgue measure of a measurable set $A\subset \R^d$ is denoted by $|A|$,
the measure of a ball with radius $r$ in $\R^d$ is denoted by $|B(r)|$.

As is customary in the literature on partial differential equations, we will denote
the inner product in $\R^d$ of gradients by dots, i.e., $\nabla y(x) \cdot \nabla p(x) := \nabla p(x)^T \nabla y(x)$.

\section{Sensitivity analysis with respect to perturbations on general sets} \label{sec_sensitivity}

\subsection{Setup of the problem}
\label{sec_setup}

Throughout the paper we assume the following about the data of the problem.

\begin{assumption}\label{ass_basic}
Let $\Omega\subset \R^d$ be a bounded domain.
Let $\alpha>0$ be given.
In addition, let $y_d,f\in L^2(\Omega)$ be given.
\end{assumption}

Let us define the set of admissible coefficient functions by
\begin{equation}\label{eq_def_calA}
 \mathcal A := \{ a \in L^\infty(\Omega; \R^{d,d}): \ a(x)\in \mathcal M \ \text{ f.a.a. } x\in \Omega\},
\end{equation}
where
\begin{equation}\label{eq_def_M}
 \mathcal M := \{ a \in \R^{d,d}: \ \xi^T a \xi \ge \alpha|\xi|^2 \ \forall \xi\in \R^d\}.
\end{equation}
In the sequel, we will work with coefficient functions from the set $\mathcal A$. Note that
$a(x)\in \mathcal A$ for almost all $x\in \Omega$ is the minimum requirement in order that the Lax-Milgram theorem guarantees existence of weak solutions.
We will not assume more regularity of $a$ and $\Omega$, and we will not rely on any elliptic regularity results beyond basic $H^1$-regularity.

Let us fix a reference coefficient $a\in \mathcal A$. We denote the corresponding solution of the state equation by $y$,
i.e., $y\in H^1_0(\Omega)$ solves
\begin{equation}\label{eq_def_y}
 \int_\Omega a \nabla y \cdot \nabla v \dx = \int_\Omega f v\dx \quad \forall v\in H^1_0(\Omega).
\end{equation}
Here, we used the notation
\[
 a \nabla y \cdot \nabla v := \sum_{i,j=1}^d a_{ij} \frac\partial{\partial x_j} y \frac\partial{\partial x_i}v.
\]
By Lax-Milgram theorem, the equation \eqref{eq_def_y} is uniquely solvable.
Given $a$, we define
\[
 J(a):= \frac12 \|y-y_d\|_{L^2(\Omega)}^2,
\]
where $y$ is the solution of \eqref{eq_def_y} to $a\in A$. We are interested in the sensitivity analysis of $J$ with respect to
perturbations of $a$. Here, we will use perturbations by characteristic functions, which is a well-known concept in optimal control, inverse problems, or material optimization.
To this end, let $\omega\subset \R^d$ be an open bounded set with $0\in \omega$. Given a point $x_0\in \Omega$, a value $b \in \mathcal M$, and a radius (or scaling parameter) $r>0$,
we define
\begin{equation}\label{eq_def_ar}
 a_r := a + \chi_{x_0 + r\omega} (b-a).
\end{equation}
The goal of this section is to compute the variation of $J$ at $a$ with respect to $b$, $\omega$, which is defined as
\[
 \delta J(a;b,x_0,\omega) := \lim_{r\to0} \frac{ J(a_r) - J(a) }{ r^d|\omega| }.
\]
Note that $r^d|\omega|$ is the Lebesgue measure of $r \omega$, and it is larger or equal to the $L^0$- or Ekeland distance between $a_r$ and $a$.

The exposition in the following subsections follows earlier work \cite{GanglSturm2020,Sturm2020_semilinear,Amstutz2021}.
As one would expect, the statements of the main results are identical. However, we
do not use or assume any regularity beyond $L^\infty$ for the coefficients and $H^1$ for the weak solutions of state and adjoint equations.

\subsection{Basic expansion of the functional}

Recall the definition of $a_r$ in \eqref{eq_def_ar}.
Let $y_r$ be the corresponding solution of the state equation, i.e.,
\begin{equation}\label{eq_def_yr}
 \int_\Omega a_r \nabla y_r \cdot \nabla v \dx = \int_\Omega f v\dx \quad \forall v\in H^1_0(\Omega).
\end{equation}
Note that the difference $y_r-y$ satisfies the following equation
\begin{equation}\label{eq_diff_yr}
 \int_\Omega a_r \nabla (y_r-y) \cdot \nabla v \dx + \int_\Omega (a_r-a) \nabla y \cdot \nabla v \dx=  0 \quad \forall v\in H^1_0(\Omega).
\end{equation}
We define the averaged adjoint $\tilde p_r\in H^1_0(\Omega)$, see, e.g., \cite{Sturm2020_semilinear},
as the solution of
\begin{equation}\label{eq_def_aadj}
   \int_\Omega a_r \nabla v \cdot \nabla \tilde p_r \dx = \frac12 \int_\Omega [(y_r-y_d)+(y-y_d)] v \dx \quad \forall v\in H^1_0(\Omega).
\end{equation}
In addition, let the adjoint $p\in H^1_0(\Omega)$ be given as the solution of
\begin{equation}\label{eq_def_p}
  \int_\Omega a \nabla v \cdot \nabla p \dx =  \int_\Omega (y-y_d) v \dx \quad \forall v\in H^1_0(\Omega).
\end{equation}
Then we have the following result.

\begin{lemma}\label{lem_expand}
Let $a_r$ as in \eqref{eq_def_ar} with the notation from \cref{sec_setup}.
Then it holds
\[
J(a_r) - J(a) = - \int_\Omega (a_r-a) \nabla y \cdot\nabla p \dx  - \int_\Omega (a_r-a) \nabla y\cdot \nabla (\tilde p_r-p)\dx ,
\]
where $y$, $\tilde p_r$, $p$ solve \eqref{eq_def_y}, \eqref{eq_def_aadj}, \eqref{eq_def_p}.
\end{lemma}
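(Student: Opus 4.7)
The identity in \cref{lem_expand} is an algebraic manipulation built on the four variational equations \eqref{eq_def_y}, \eqref{eq_def_yr}, \eqref{eq_def_aadj}, \eqref{eq_def_p} (together with the derived equation \eqref{eq_diff_yr} for $y_r-y$). The role of the averaged adjoint $\tilde p_r$ is precisely to linearize the squared-norm difference, so the proof should proceed by unfolding $J(a_r)-J(a)$ into something in which $\tilde p_r$ can be plugged in exactly.

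The plan is as follows. First I would write
\[
J(a_r)-J(a) = \tfrac12\int_\Omega\bigl[(y_r-y_d)^2-(y-y_d)^2\bigr]\dx
= \tfrac12\int_\Omega (y_r-y)\bigl[(y_r-y_d)+(y-y_d)\bigr]\dx,
\]
using the factorization $A^2-B^2=(A-B)(A+B)$ with $A=y_r-y_d$, $B=y-y_d$. The right-hand side is precisely the source term of the averaged adjoint equation \eqref{eq_def_aadj}, and $y_r-y\in H^1_0(\Omega)$ is admissible as a test function. Testing \eqref{eq_def_aadj} with $v=y_r-y$ therefore yields
\[
J(a_r)-J(a) = \int_\Omega a_r\,\nabla (y_r-y)\cdot\nabla\tilde p_r\dx.
\]

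Next I would apply \eqref{eq_diff_yr} with the admissible test function $v=\tilde p_r\in H^1_0(\Omega)$, which gives
\[
\int_\Omega a_r\,\nabla(y_r-y)\cdot\nabla\tilde p_r\dx = -\int_\Omega (a_r-a)\,\nabla y\cdot\nabla\tilde p_r\dx.
\]
Combining the two displays yields $J(a_r)-J(a)=-\int_\Omega (a_r-a)\nabla y\cdot\nabla\tilde p_r\dx$. Finally, splitting $\tilde p_r = p + (\tilde p_r-p)$ in the integrand produces the two terms in the statement of the lemma.

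There is no real obstacle here: well-posedness of \eqref{eq_def_y}, \eqref{eq_def_yr}, \eqref{eq_def_aadj}, \eqref{eq_def_p} via Lax-Milgram is already available (all four bilinear forms inherit coercivity from $a,a_r\in\mathcal A$), and each test function used above lies in $H^1_0(\Omega)$ by construction. The only point that requires a moment of care is that the averaged adjoint equation is set up with $\nabla v$ in the left slot of $a_r$ while \eqref{eq_diff_yr} has $\nabla(y_r-y)$ in the left slot; this is compatible because in each step I am contracting the full bilinear form $\int a_r\nabla(y_r-y)\cdot\nabla\tilde p_r\dx$ directly, without ever needing symmetry of $a_r$. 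Thus the adjoint and the averaged adjoint can be defined with the transposed coefficient without affecting the identity. No regularity beyond $H^1$ is used, in line with the paper's standing philosophy.
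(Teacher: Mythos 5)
Your proposal is correct and follows exactly the paper's own argument: factor the difference of squares, test the averaged adjoint equation \eqref{eq_def_aadj} with $v=y_r-y$, test \eqref{eq_diff_yr} with $v=\tilde p_r$, and split $\tilde p_r=p+(\tilde p_r-p)$. Your added remark about the slot in which $\nabla v$ appears in the (possibly nonsymmetric) bilinear form is a correct and worthwhile point of care that the paper leaves implicit.
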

\begin{proof}
Using the equations \eqref{eq_def_aadj} and \eqref{eq_diff_yr}, we find
\[\begin{split}
 J(a_r) - J(a) &= \frac12 \|y_r-y_d\|_{L^2(\Omega)}^2 - \frac12 \|y-y_d\|_{L^2(\Omega)}^2 \\
 & = \frac12( y_r - y, \ (y_r-y_d) + (y-y_d)) \\
 & = \int_\Omega a_r \nabla (y_r - y) \cdot \nabla \tilde p_r \dx \\
 & =-\int_\Omega (a_r-a) \nabla y \cdot\nabla \tilde p_r\dx \\
 & =- \int_\Omega (a_r-a) \nabla y \cdot\nabla p \dx  - \int_\Omega (a_r-a) \nabla y \cdot\nabla (\tilde p_r-p)\dx ,
 \end{split}
\]
which is the claim.
\end{proof}
Here the second term in the expansion of \cref{lem_expand} seems to be of second order.
In fact, it corresponds to the remainder term in a Taylor expansion of $J$ using the Fr\'echet differentiability of $a \mapsto y$ from
$L^\infty(\Omega)$ to $H^1_0(\Omega)$. Consequently it is of the order $\|a_r-a\|_{L^\infty(\Omega)}^2$.
However, the second term is {\em not} of higher order with respect to $r\searrow0$.
In fact, the second term converges with the same order as the first for $r\searrow0$.

The key for the asymptotic analysis of the expansion of \cref{lem_expand}
is the following coordinate transform.
This well-known idea is to transform the small set $x_0 + r\omega$ to $\omega$.
To this end, let us define the following functions:
\begin{equation}\label{eq_def_Tr}
 T_r(x') :=  x_0 + rx', \quad T_r^{-1}(x) := (x-x_0)/r.
\end{equation}

%

\subsection{Lebesgue differentiation theorem}

Before continuing with the analysis of the expansion, let us recall the Lebesgue differentiation theorem
in the following form.

\begin{theorem}\label{thm_lebesgue_lp}
 Let $u\in L^p_{loc}(\R^d)$ for some $p\in [1,\infty)$.
Then
\[
 \lim_{r\to0} \frac1{|B(x,r)|} \int_{B(x,r)} |u(y)-u(x)|^p \d y =0
\]
for almost all $x\in \R^d$.
These points $x$ are called  {\em $p$-Lebesgue points} of $u$.
\end{theorem}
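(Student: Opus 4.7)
The plan is to reduce the $L^p$ statement to the classical $L^1$ version of the Lebesgue differentiation theorem, namely that for $f\in L^1_{loc}(\R^d)$ one has $\frac{1}{|B(x,r)|}\int_{B(x,r)} f(y)\,\d y \to f(x)$ for almost every $x$, which is the standard consequence of the Hardy--Littlewood maximal inequality combined with density of continuous functions. I would take this $L^1$ form as known and work from there.

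First, I would fix a countable dense subset $D\subset\R$ (for instance $D=\Q$). For each $q\in D$, the function $y\mapsto |u(y)-q|^p$ lies in $L^1_{loc}(\R^d)$, because $|u-q|^p \le 2^{p-1}(|u|^p + |q|^p)$ and $|u|^p\in L^1_{loc}$ by assumption. Applying the $L^1$ version to $|u-q|^p$ yields a null set $N_q\subset\R^d$ such that for every $x\notin N_q$,
\[
\lim_{r\to0}\frac{1}{|B(x,r)|}\int_{B(x,r)} |u(y)-q|^p \,\d y = |u(x)-q|^p.
\]
Since $D$ is countable, $N:=\bigcup_{q\in D} N_q$ is still a null set, and the displayed identity holds simultaneously for all $q\in D$ and every $x\in\R^d\setminus N$. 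We may also enlarge $N$ by the null set where $u$ takes the value $\pm\infty$.

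Second, I would fix $x\notin N$ and $\varepsilon>0$, and choose $q\in D$ with $|u(x)-q|<\varepsilon$. Writing $u(y)-u(x) = (u(y)-q)+(q-u(x))$ and applying Minkowski's inequality in $L^p(B(x,r), \d y/|B(x,r)|)$, we get
\[
\left(\frac{1}{|B(x,r)|}\int_{B(x,r)}|u(y)-u(x)|^p\,\d y\right)^{1/p}
\le \left(\frac{1}{|B(x,r)|}\int_{B(x,r)}|u(y)-q|^p\,\d y\right)^{1/p} + |q-u(x)|.
\]
Passing to the limit $r\to 0$ using the first step, the right-hand side tends to $2|u(x)-q|<2\varepsilon$. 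Since $\varepsilon$ was arbitrary, the limsup as $r\to 0$ of the left-hand side is zero, which is the desired conclusion for every $x\in\R^d\setminus N$.

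The only genuine difficulty is the $L^1$ statement used at the start; the passage from $L^1$ to $L^p$ is the routine density/triangle-inequality argument sketched above. If one did not want to invoke the $L^1$ form as a black box, the real work would be to prove the weak-type $(1,1)$ bound for the Hardy--Littlewood maximal operator via a Vitali-type covering lemma, which is the standard main obstacle in any self-contained treatment.
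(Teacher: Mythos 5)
Your argument is correct and is essentially the standard proof of this result: the paper itself only cites Evans--Gariepy (Section 1.7, Corollary 1), and the proof there is exactly your reduction, applying the $L^1$ differentiation theorem to $|u-q|^p$ for $q$ in a countable dense set and then using density of $\Q$ (via Minkowski, or equivalently the elementary inequality $|s+t|^p\le 2^{p-1}(|s|^p+|t|^p)$) to pass to general $x$ outside the countable union of null sets. No gaps; nothing further is needed.
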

\begin{proof}
See \cite[Section 1.7, Corollary 1]{EvansGariepy1992}.
\end{proof}
Note that the claim is slightly different from the standard formulation of the theorem given by
\[
 \lim_{r\to0} \frac1{|B(x,r)|} \int_{B(x,r)} u(y) \d y = u(x)
\]
for almost all $x\in \R^d$. In the sequel, the Lebesgue differentiation theorem in the form of \cref{thm_lebesgue_lp}
will be a valuable tool. It will serve as a replacement of continuity assumptions frequently encountered in the literature.

A well-known result says that if there is $C>0$ and $\alpha \in (0,1]$ such that
\[
  \frac1{|B(x,r)|} \int_{B(x,r)} |u(y)-u(x)| \d y  \le C r^\alpha
\]
for all $x$, then $u$ is H\"older continuous with order $\alpha$.
For a precise formulation, see \cite[Theorem 4.3]{RafeiroSamkoSamko2013}.
This might explain the heavy use of H\"older continuity assumptions in the literature on topological derivatives
for perturbations in the coefficients of the differential operator.
As we will show, convergence to zero as in  \cref{thm_lebesgue_lp} is enough, no faster convergence with respect to $r\searrow0$ is needed.

It is well-known that
the above theorem can be generalized to take means on sets of bounded eccentricity. Here, we will use the following modification.

\begin{corollary}\label{cor_lebesgue_lp_general}
Let $\omega \subset \R^d$ be such that there are $\rho_1,\rho_2>0$ with
\[
 B(0,\rho_1) \subset \omega \subset  B(0,\rho_2).
\]
Let $x$ be a $p$-Lebesgue point of $u$.
Then
\[
 \lim_{r\to0} \frac1{r^d|\omega|} \int_{x + r\omega} |u(y)-u(x)|^p \d y =0.
\]
\end{corollary}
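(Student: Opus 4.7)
The plan is to reduce the statement to \cref{thm_lebesgue_lp} by enlarging the domain of integration from $x+r\omega$ to the circumscribed ball $B(x,r\rho_2)$ and controlling the resulting loss by the bounded-eccentricity ratio $(\rho_2/\rho_1)^d$.

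First I would observe that the inclusion $\omega \subset B(0,\rho_2)$ implies $x + r\omega \subset B(x,r\rho_2)$, so that the nonnegative integrand can only grow when the domain is enlarged:
\[
\frac{1}{r^d|\omega|} \int_{x+r\omega} |u(y)-u(x)|^p \d y
\le \frac{1}{r^d|\omega|} \int_{B(x,r\rho_2)} |u(y)-u(x)|^p \d y.
\]
Next I would factor out the measure of the ball by multiplying and dividing by $|B(x,r\rho_2)| = r^d \rho_2^d |B(0,1)|$. Using the other inclusion $B(0,\rho_1) \subset \omega$, which gives $|\omega| \ge \rho_1^d |B(0,1)|$, the prefactor satisfies
\[
\frac{|B(x,r\rho_2)|}{r^d|\omega|} = \frac{\rho_2^d|B(0,1)|}{|\omega|} \le \left(\frac{\rho_2}{\rho_1}\right)^{\!d},
\]
which is independent of $r$.

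Finally, since $x$ is a $p$-Lebesgue point of $u$ and $r\rho_2 \to 0$ as $r\to 0$, \cref{thm_lebesgue_lp} applied along the sequence of radii $r\rho_2$ yields
\[
\frac{1}{|B(x,r\rho_2)|}\int_{B(x,r\rho_2)} |u(y)-u(x)|^p \d y \to 0.
\]
Combining this limit with the uniform bound on the prefactor gives the desired conclusion.

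There is no real obstacle here; the only mild subtlety is to notice that one must invoke \cref{thm_lebesgue_lp} along the rescaled radius $r\rho_2$ rather than $r$ itself, but this is immaterial since $r\rho_2 \to 0$ iff $r\to 0$.
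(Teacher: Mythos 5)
Your argument is correct and is essentially the same as the paper's: the paper likewise enlarges the domain to $B(x,r\rho_2)$, replaces $r^d|\omega|$ by the smaller quantity $|B(x,r\rho_1)|$, and absorbs the resulting factor $(\rho_2/\rho_1)^d$ before invoking \cref{thm_lebesgue_lp} at the rescaled radius. Your write-up just spells out the steps the paper compresses into a single displayed inequality.
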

\begin{proof}
 The claim follows from
 \[
  \frac1{r^d|\omega|} \int_{x + r\omega} |u(y)-u(x)|^p \d y
  \le \frac1{|B(x,r\rho_1)|} \int_{B(x,\rho_2r)} |u(y)-u(x)|^p \d y \to0,
 \]
see also \cite[Section 3.1.2]{SteinShakarchi2005}.
\end{proof}

The interplay between the Lebesgue differentiation theorem and the coordinate transform \eqref{eq_def_Tr} is made precise in the next result.

\begin{corollary}\label{cor_lebesgue_lp}
 Let $u \in L^p(\Omega)$, $p\in [1,\infty)$.
 Let $x_0 \in \Omega$ be a $p$-Lebesgue point of $u$.
 Let $T_r$ be given by \eqref{eq_def_Tr}.
 Then
 \[
  \lim_{r\to0}\int_{s \omega} |u\circ T_r - u(x_0)|^p \dx' = 0
 \]
 for all $s\ge0$.
\end{corollary}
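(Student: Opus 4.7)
The statement reduces, after a linear change of variables, to the averaged statement already proved in \cref{cor_lebesgue_lp_general}, so my plan is simply to execute that reduction carefully.

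First, I would dispose of the trivial case $s=0$, where the integral is over the empty set. For $s>0$, the key observation is that since $x_0$ is in the (open) domain $\Omega$ and $\omega$ is bounded, say $\omega \subset B(0,\rho_2)$, we have $T_r(s\omega) = x_0 + r s \omega \subset \Omega$ for all $r$ small enough. Thus the composition $u \circ T_r$ is well defined on $s\omega$ for small $r$, and we may apply the change of variables $y = T_r(x') = x_0 + r x'$ with Jacobian $r^d$ to obtain
\[
 \int_{s\omega} |u\circ T_r - u(x_0)|^p \dx' = \frac{1}{r^d} \int_{x_0 + r s \omega} |u(y) - u(x_0)|^p \dy.
\]

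Next, I note that the scaled set $s\omega$ inherits the bounded eccentricity condition required by \cref{cor_lebesgue_lp_general}: if $B(0,\rho_1)\subset \omega \subset B(0,\rho_2)$, then $B(0,s\rho_1)\subset s\omega \subset B(0,s\rho_2)$. Applying \cref{cor_lebesgue_lp_general} to $s\omega$ in place of $\omega$ at the $p$-Lebesgue point $x_0$ gives
\[
 \lim_{r\to 0} \frac{1}{r^d |s\omega|} \int_{x_0 + r s \omega} |u(y) - u(x_0)|^p \dy = 0.
\]
Since $|s\omega| = s^d |\omega|$ is a positive constant independent of $r$, multiplying through by $s^d|\omega|$ and combining with the change-of-variables identity above yields
\[
\int_{s\omega} |u\circ T_r - u(x_0)|^p \dx' = s^d|\omega| \cdot \frac{1}{r^d|s\omega|} \int_{x_0 + rs\omega} |u(y)-u(x_0)|^p \dy \to 0,
\]
which is the claim.

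There is no real obstacle here; the only subtle points are the verification that $T_r(s\omega) \subset \Omega$ for sufficiently small $r$ (so that $u\circ T_r$ is well defined, or alternatively one can extend $u$ by zero to $\R^d$), and correctly tracking the volume factors through the rescaling so that the expression matches the normalization used in \cref{cor_lebesgue_lp_general}.
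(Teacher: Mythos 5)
Your proof is correct and follows essentially the same route as the paper: the change of variables $x = T_r(x')$ with Jacobian $r^d$, followed by an application of \cref{cor_lebesgue_lp_general} to the dilated set $s\omega$. The paper's proof is a one-line version of yours; your additional remarks on the case $s=0$, the well-definedness of $u\circ T_r$ for small $r$, and the volume normalization $|s\omega|=s^d|\omega|$ are correct elaborations of details the paper leaves implicit.
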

\begin{proof}
By elementary calculations, we get
\[
  \int_{s\omega} |u\circ T_r - u(x_0)|^p \dx' = r^{-d} \int_{x_0 + sr \omega} |u(x)-u(x_0)|^p \dx
  \to 0,
\]
where we have used \cref{cor_lebesgue_lp_general}.
\end{proof}

\subsection{Transformed linearized state equation}

In this section, we will investigate the asymptotics of $\frac 1{r^d|\omega|} (y_r-y)$.
Let us recall from  \eqref{eq_diff_yr}
that the difference $y_r-y$ satisfies
\[
   \int_\Omega a_r \nabla (y_r-y) \cdot\nabla v \dx + \int_\Omega (a_r-a) \nabla y \cdot\nabla v \dx=  0
\quad \forall v\in H^1_0(\Omega).
\]
Following earlier works, e.g., \cite{Sturm2020_semilinear}, we define
\begin{equation}\label{eq_def_Kr}
 K_r(x') := \frac1r (y_r-y)\circ T_r(x').
\end{equation}
 Due to construction, it follows $\nabla_{x'} K_r(x') = ((\nabla (y_r-y)) \circ T_r)(x')$.
Then $K_r$ satisfies the transformed equation
\begin{equation}\label{eq_Kr}
 \int_{T_r^{-1}(\Omega)} a_r \circ T_r \nabla_{x'} K_r \cdot \nabla_{x'} v \dx' + \int_\omega ((b-a)\nabla y)\circ T_r \cdot \nabla_{x'} v \dx' =0
\end{equation}
for all $v\in H^1_0(T_r^{-1}( \Omega))$. Note, $K_r\in H^1_0(T_r^{-1}( \Omega))$ implies that its extension by zero belongs to $H^1(\R^d)$ without
assumptions on the regularity of $\Omega$, see \cite[Lemma 3.22]{Adams1975}.
Hence, in the sequel we will assume $K_r$ is extended by zero to $\R^d$.
The next goal is to pass to the limit $r\searrow0$ in \eqref{eq_Kr}.
We have the following bound of $K_r$.

\begin{lemma}\label{lem_Kr_bounded}
 Let $K_r$ be given by \eqref{eq_def_Kr}.
 Let $x_0$ be a $2$-Lebesgue point of $|(b-a) \nabla y|$.
 Then there is $M>0$ such that
 \[
 r^{2} \|K_r\|_{L^2(\R^d)}^2 + \|\nabla_{x'} K_r\|_{L^2( \R^d )}^2 \le M.
 \]
\end{lemma}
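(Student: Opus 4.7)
The natural strategy is to test the scaled equation \eqref{eq_Kr} with $v = K_r$ itself and exploit uniform ellipticity. Since $K_r \in H^1_0(T_r^{-1}(\Omega))$, plugging this in yields
\[
\int_{T_r^{-1}(\Omega)} (a_r \circ T_r) \nabla_{x'} K_r \cdot \nabla_{x'} K_r \dx' = - \int_\omega ((b-a)\nabla y) \circ T_r \cdot \nabla_{x'} K_r \dx'.
\]
The uniform ellipticity $\xi^T a \xi \ge \alpha |\xi|^2$ on $\mathcal M$ gives a coercive lower bound $\alpha \|\nabla_{x'} K_r\|_{L^2(T_r^{-1}(\Omega))}^2$ for the left-hand side, and Cauchy--Schwarz together with Young's inequality on the right-hand side will produce
\[
\|\nabla_{x'} K_r\|_{L^2(T_r^{-1}(\Omega))}^2 \le \frac{1}{\alpha^2} \int_\omega \bigl| ((b-a)\nabla y) \circ T_r \bigr|^2 \dx'.
\]

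Second, I invoke \cref{cor_lebesgue_lp} with $u = |(b-a)\nabla y| \in L^2(\Omega)$ and $p=2$; since $x_0$ is by hypothesis a $2$-Lebesgue point of this function and $\omega$ is bounded, the right-hand side converges to $|\omega|\,|(b-a(x_0))\nabla y(x_0)|^2$ as $r \searrow 0$, in particular it is uniformly bounded. Extending $K_r$ by zero from $T_r^{-1}(\Omega)$ to $\R^d$ (as justified in the excerpt via \cite[Lemma 3.22]{Adams1975}) then yields the desired uniform bound on $\|\nabla_{x'} K_r\|_{L^2(\R^d)}^2$.

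Third, to handle the weighted $L^2$-term, I would \emph{not} try to apply Poincar\'e on the transformed domain (which grows like $1/r$), but instead change variables back to $\Omega$ and use Poincar\'e there. Directly from the definition \eqref{eq_def_Kr},
\[
r^2 \|K_r\|_{L^2(\R^d)}^2 = r^{-d} \|y_r-y\|_{L^2(\Omega)}^2, \qquad \|\nabla_{x'} K_r\|_{L^2(\R^d)}^2 = r^{-d} \|\nabla(y_r-y)\|_{L^2(\Omega)}^2,
\]
so the Poincar\'e inequality on the bounded domain $\Omega$ applied to $y_r - y \in H^1_0(\Omega)$ gives
\[
r^2 \|K_r\|_{L^2(\R^d)}^2 \le C_P^2 \|\nabla_{x'} K_r\|_{L^2(\R^d)}^2,
\]
reducing this estimate to the one already obtained.

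The only delicate point is the passage from the $L^\infty$-type hypothesis on the coefficient to an $L^2$-bound on $((b-a)\nabla y)\circ T_r$ over the fixed set $\omega$; this is precisely where the Lebesgue differentiation theorem enters, replacing the H\"older continuity assumptions on $a$ and $\nabla y$ that are made in earlier treatments. All other steps are routine energy estimates.
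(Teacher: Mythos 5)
Your proof is correct and follows essentially the same route as the paper: an energy estimate obtained by testing the equation with the solution, the Lebesgue differentiation theorem (via \cref{cor_lebesgue_lp}) to bound the source term over $\omega$, and the Poincar\'e inequality on $\Omega$ for the weighted $L^2$-part. The only cosmetic difference is that you test the transformed equation \eqref{eq_Kr} with $K_r$ and change variables back for the Poincar\'e step, whereas the paper tests \eqref{eq_diff_yr} with $y_r-y$ in $\Omega$ and transforms at the very end; the two are identical under the substitution $T_r$.
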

\begin{proof}
Testing \eqref{eq_diff_yr} with $y_r -y $ yields
\[\begin{split}
 \alpha \|\nabla (y_r-y)\|_{L^2(\Omega)}^2 \dx & \le -\int_\Omega (a_r-a)\nabla y \cdot \nabla (y_r-y) \dx\\
 &\le \frac\alpha2  \|\nabla (y_r-y)\|_{L^2(\Omega)}^2 + \frac1{2\alpha} \int_{x_0 + r \omega} |(b-a) \nabla y|^2 \dx.
\end{split}\]
By Poincare inequality, we get
\[
 \|y_r - y\|_{L^2(\Omega)}^2 + \|\nabla (y_r-y)\|_{L^2(\Omega)}^2 \le c \int_{x_0 + r \omega} |(b-a) \nabla y|^2 \dx
\]
for some $c>0$ only depending on $\Omega$ and $\alpha$.
Due to \cref{thm_lebesgue_lp}, we have that
\[
 \frac1{r^d|\omega|}  \int_{x_0 + r \omega} |(b-a) \nabla y|^2 \dx
\]
converges for $r\searrow0$. This shows that there is $M>0$ such that
\[
  \|y_r - y\|_{L^2(\Omega)}^2  + \|\nabla (y_r-y)\|_{L^2(\Omega)}^2 \le M r^d|\omega|
\]
for all $r>0$ sufficiently small.
Applying the coordinate transform $T_r$ to this inequality and using the definition of $K_r$ in \eqref{eq_def_Kr} proves the claim.
%
\end{proof}

\begin{corollary}\label{cor_rKr_weakly_zero}
 Let $K_r$ be given by \eqref{eq_def_Kr}.
 Let $x_0$ be a $2$-Lebesgue point of $|(b-a) \nabla y|$.
 Then
 \[
  r K_r \rightharpoonup 0 \text{ in } L^2(\R^d).
 \]
\end{corollary}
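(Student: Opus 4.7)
The plan is to exploit the two bounds in \cref{lem_Kr_bounded}: the inequality $r^2\|K_r\|_{L^2(\R^d)}^2 \le M$ says that $rK_r$ is bounded in $L^2(\R^d)$, while $\|\nabla_{x'} K_r\|_{L^2(\R^d)}^2 \le M$ says that after multiplication by $r$ the gradient $\nabla_{x'}(rK_r) = r\,\nabla_{x'} K_r$ tends to zero \emph{strongly} in $L^2(\R^d)$ as $r\searrow 0$. Any weak limit of $rK_r$ must therefore have zero distributional gradient, hence be (a.e.) constant, and then vanish because it is in $L^2(\R^d)$.

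I would realise this through a subsequence--subsequence argument. Let $r_k\searrow 0$ be arbitrary. By Banach--Alaoglu, extract a further subsequence (not relabelled) along which $r_k K_{r_k}\rightharpoonup g$ in $L^2(\R^d)$ for some $g\in L^2(\R^d)$. For every $\phi\in C_c^\infty(\R^d)$ and every index $i\in\{1,\dots,d\}$, integration by parts --- legitimate since the zero extension of $K_r$ belongs to $H^1(\R^d)$ as noted after \eqref{eq_Kr} --- gives
\[
\int_{\R^d} (r_k K_{r_k})\,\partial_i\phi \dx' \;=\; -\int_{\R^d} r_k\,\partial_i K_{r_k}\,\phi \dx',
\]
and Cauchy--Schwarz bounds the right-hand side in absolute value by $r_k\sqrt{M}\,\|\phi\|_{L^2(\R^d)}$, which tends to $0$. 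Passing to the limit on the left using the weak convergence yields $\int_{\R^d} g\,\partial_i\phi \dx' = 0$ for every test function and every $i$. Hence $\nabla g = 0$ in the sense of distributions, so $g$ is a.e.\ constant; the fact that $g\in L^2(\R^d)$ then forces $g\equiv 0$.

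Since every subsequence of $\{rK_r\}_{r>0}$ admits a further subsequence converging weakly to the common limit $0$, the whole family satisfies $rK_r\rightharpoonup 0$ in $L^2(\R^d)$, as claimed. The only point that is not entirely automatic is the justification of the integration by parts on all of $\R^d$, but this is already covered by the zero-extension observation recorded before \cref{lem_Kr_bounded}. I do not anticipate any real obstacle.
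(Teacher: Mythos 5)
Your proof is correct and follows essentially the same route as the paper: use the bounds of \cref{lem_Kr_bounded} to get $L^2$-boundedness of $rK_r$ and strong $L^2$-convergence $r\nabla_{x'}K_r\to 0$, conclude that any weak limit has vanishing distributional gradient, hence is constant, hence zero in $L^2(\R^d)$. You merely spell out the integration by parts and the subsequence--subsequence step that the paper leaves implicit.
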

\begin{proof}
 By \cref{lem_Kr_bounded}, we have that $(rK_r)_{r>0}$ is uniformly bounded in $H^1(\R^d)$ with
 $r \nabla_{x'} K_r \to 0$ in $L^2(\R^d)$ for $r\searrow0$.
 Let $r_k \searrow0$ such that $r_k K_{r_k} \rightharpoonup w$ in $L^2(\R^d)$.
 It follows $\nabla_{x'} w=0$ in $\R^d$, so that $w$ has to be equal to a constant function.
 Since $w\in L^2(\R^d)$, it follows $w=0$.
 \end{proof}

Let us define the coefficient function
\begin{equation}\label{def_atilde}
 \tilde a := \chi_\omega b + \chi_{\omega^c} a(x_0).
\end{equation}
It turns out that $\tilde a$ is the limit of the transformed coefficients $a_r \circ T_r$ in the following sense.

\begin{lemma}\label{lem_conv_coeff}
Let $x_0$ be a $2$-Lebesgue point of $a$.
We have the convergence
\[
 a_r \circ T_r \to \tilde a
\]
in $L^2(n\omega)$ for all $n\in \N$.
\end{lemma}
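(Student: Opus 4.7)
The plan is a direct computation followed by one application of the transformed Lebesgue differentiation result already established in \cref{cor_lebesgue_lp}. First, I would unpack the definitions: since $T_r(x')=x_0+rx'$, the point $T_r(x')$ lies in $x_0+r\omega$ precisely when $x'\in\omega$, so $\chi_{x_0+r\omega}\circ T_r=\chi_\omega$ identically (independently of $r$). Substituting the definition of $a_r$ from \eqref{eq_def_ar} then gives the pointwise identity
\[
 (a_r\circ T_r)(x') = \chi_\omega(x')\, b + \chi_{\omega^c}(x')\, a(T_r(x')),
\]
which, compared with the definition \eqref{def_atilde} of $\tilde a$, yields
\[
 (a_r\circ T_r)(x') - \tilde a(x') = \chi_{\omega^c}(x')\bigl(a(T_r(x')) - a(x_0)\bigr).
\]

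Next, I would estimate the $L^2(n\omega)$-norm. Using $|\chi_{\omega^c}|\le 1$ and integrating over $n\omega$, I obtain
\[
 \|a_r\circ T_r - \tilde a\|_{L^2(n\omega)}^2 \le \int_{n\omega} |a\circ T_r - a(x_0)|^2\dx'.
\]
Applied componentwise (viewing $a$ as a tuple of $L^\infty(\Omega)\subset L^2(\Omega)$ functions, and noting that $x_0$ being a $2$-Lebesgue point of $a$ means it is a $2$-Lebesgue point of each entry), \cref{cor_lebesgue_lp} with $p=2$ and $s=n$ gives that this integral tends to zero as $r\searrow 0$. Summing over the finitely many matrix entries preserves the convergence, which is precisely the claim.

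I do not anticipate any genuine obstacle: the argument hinges only on the observation that the rescaling $T_r$ exactly maps the perturbation set $x_0+r\omega$ onto the fixed reference set $\omega$, so that the only $r$-dependence left in $a_r\circ T_r$ sits in the composition $a\circ T_r$, whose convergence to the constant $a(x_0)$ in $L^2_{\mathrm{loc}}$ is exactly what \cref{cor_lebesgue_lp} was tailored to deliver. The restriction to $n\omega$ with $n\in\N$ is just a convenient way of covering every bounded subset of $\R^d$ in the subsequent passage to the limit in \eqref{eq_Kr}.
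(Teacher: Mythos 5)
Your argument is correct and is essentially the paper's own proof: both observe that $a_r\circ T_r$ agrees with $\tilde a$ on $\omega$, reduce the $L^2(n\omega)$-difference to the integral of $|a\circ T_r - a(x_0)|^2$ (the paper over $n\omega\setminus\omega$, you by the trivial bound over $n\omega$), and conclude with \cref{cor_lebesgue_lp}. No gaps.
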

\begin{proof}
Observe that $ a_r \circ T_r = \tilde a$ on $\omega$. Then we get
\[
 \int_{n\omega} |a_r \circ T_r - \tilde a|^2 \dx' = \int_{n\omega \setminus \omega} |(a \circ T_r)(x') - a(x_0)|^2 \dx',
\]
and the claim is a direct consequence of \cref{cor_lebesgue_lp}.
\end{proof}

Let us introduce the so-called homogeneous Sobolev space (or Beppo-Levi space).
Here, we define it as the space of equivalence classes under the relation $u \sim v \Leftrightarrow \nabla(u-v)=0$.
That is, the constant functions are quotiented out:
\[
 \dot H^1(\R^d) := \{ z\in H^1_{\mathrm{loc}}(\R^d): \ \nabla z \in L^2(\R^d)\} \ / \  \R.
\]
It is a Hilbert space when supplied with the norm $\|z\|_{ \dot H^1(\R^d)}:=\|\nabla z\|_{L^2(\R^d)}$.
In addition, equivalence classes of test functions from $C_c^\infty(\R^d)$ are dense in $\dot H^1(\R^d)$. 
For the proofs, we refer \cite[Section 3]{Praetorius2004}.
Note that \cite{Praetorius2004} denotes the Beppo-Levi space $\dot H^1(\R^d)$ by $H^1(\R^d)$, \cite[eq.\, (10)]{Praetorius2004}.

This space is the proper space to look for the weak limit of $K_r$ for $r\searrow0$.

\begin{lemma}\label{lem_conv_Kr_K}
 Let $x_0$ be a $2$-Lebesgue point of $|(b-a) \nabla y|$ and $a$.
Then we have $K_r \rightharpoonup K$ in $\dot H^1(\R^d)$, where $K$ is the unique solution of
\begin{equation}\label{eq_K}
  \int_{\R^d} \tilde a \nabla_{x'} K \cdot \nabla_{x'} v \dx' + (b-a(x_0))\nabla y(x_0) \cdot \int_\omega \nabla_{x'} v \dx' =0 \quad \forall v\in \dot H^1(\R^d).
\end{equation}
\end{lemma}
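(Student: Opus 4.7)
The plan is first to establish well-posedness of \eqref{eq_K} by Lax-Milgram on $\dot H^1(\R^d)$, then extract a weak limit from Lemma \ref{lem_Kr_bounded}, and finally pass to the limit in \eqref{eq_Kr} using compactly supported test functions.

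For well-posedness, I would check that the bilinear form $(u,v)\mapsto \int_{\R^d}\tilde a\,\nabla u\cdot\nabla v\dx'$ is bounded and coercive on $\dot H^1(\R^d)$: boundedness follows from $\tilde a\in L^\infty(\R^d;\R^{d,d})$, and coercivity with constant $\alpha$ follows from $a(x_0),b\in\mathcal M$ and the definition \eqref{def_atilde} of $\tilde a$. The linear form $v\mapsto -(b-a(x_0))\nabla y(x_0)\cdot\int_\omega\nabla_{x'}v\dx'$ is bounded by $|b-a(x_0)|\,|\nabla y(x_0)|\,|\omega|^{1/2}\|\nabla_{x'}v\|_{L^2(\omega)}$ via Cauchy-Schwarz, so Lax-Milgram delivers a unique $K\in\dot H^1(\R^d)$.

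Lemma \ref{lem_Kr_bounded} yields a uniform bound on $\|\nabla_{x'}K_r\|_{L^2(\R^d)}$, so $(K_r)$ is bounded in $\dot H^1(\R^d)$ and I extract a subsequence $K_{r_k}\rightharpoonup\tilde K$. To identify $\tilde K$, I test \eqref{eq_Kr} against an arbitrary $v\in C_c^\infty(\R^d)$. Since $x_0\in\Omega$ and $\Omega$ is open, for $r$ small enough $\supp v\subset T_r^{-1}(\Omega)$, so $v$ is admissible. Fix $N\in\N$ with $\supp v\subset N\omega$. For the first term, Lemma \ref{lem_conv_coeff} gives $a_r\circ T_r\to\tilde a$ strongly in $L^2(N\omega)$; multiplying by $\nabla_{x'}v\in L^\infty$ with support in $N\omega$ yields strong $L^2$-convergence of $(a_r\circ T_r)\nabla_{x'}v$, which paired with the weak convergence of $\nabla_{x'}K_{r_k}$ produces the limit $\int_{\R^d}\tilde a\,\nabla_{x'}\tilde K\cdot\nabla_{x'}v\dx'$. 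For the second term, applying Corollary \ref{cor_lebesgue_lp} to the vector-valued function $(b-a)\nabla y$ at its Lebesgue point $x_0$ gives $((b-a)\nabla y)\circ T_r\to (b-a(x_0))\nabla y(x_0)$ strongly in $L^2(\omega)$, and pairing with $\nabla_{x'}v\in L^2(\omega)$ recovers the desired boundary-type term.

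By the density of (equivalence classes of) $C_c^\infty(\R^d)$ in $\dot H^1(\R^d)$ recalled in the excerpt, the identity extends to all test functions $v\in\dot H^1(\R^d)$, so $\tilde K$ solves \eqref{eq_K}. Uniqueness of the Lax-Milgram solution forces $\tilde K=K$, and since every weakly convergent subsequence has the same limit, the whole family $K_r$ converges weakly in $\dot H^1(\R^d)$ to $K$. The main obstacle is the interplay between the moving domains $T_r^{-1}(\Omega)$ and the strong/weak convergence in the first term of \eqref{eq_Kr}: the restriction to compactly supported test functions is what makes strong $L^2$-convergence of the transformed coefficient available on a fixed compact set, after which the rest is routine, with the Lebesgue-point hypotheses on $a$ and $(b-a)\nabla y$ exactly replacing the continuity assumptions used elsewhere in the literature.
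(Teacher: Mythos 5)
Your proposal is correct and follows essentially the same route as the paper: Lax--Milgram for well-posedness of \eqref{eq_K}, the uniform bound from \cref{lem_Kr_bounded} to extract a weak limit, testing \eqref{eq_Kr} with $v\in C_c^\infty(\R^d)$ for $r$ small (so that $v$ is admissible and the integrals localize to a fixed $N\omega$), strong $L^2$ convergence of the transformed coefficient from \cref{lem_conv_coeff} paired against the weak convergence of $\nabla_{x'}K_{r_k}$, \cref{cor_lebesgue_lp} for the source term, and density of $C_c^\infty(\R^d)$ plus uniqueness to conclude. The only differences are expository (you spell out the Lax--Milgram verification and the final subsequence argument, which the paper leaves implicit).
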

\begin{proof}
 Existence and uniqueness of the solution of \eqref{eq_K} follows directly from the Lax-Milgram theorem.
 For each $r>0$, the function $K_r$ belongs to the spaces $H^1_0(T_r^{-1}(\Omega))$ and $H^1(\R^d)$, hence $K_r\in \dot H^1(\R^d)$.
By \cref{lem_Kr_bounded}, $(K_r)$ is bounded in $\dot H^1(\R^d)$.

Let $r_k\searrow0$ be a sequence such that $K_{r_k} \rightharpoonup \tilde K$ in $\dot H^1(\R^d)$.
It remains to pass to the limit in the equation \eqref{eq_def_Kr}. Let $v\in C_c^\infty(\R^d)$ be given. Let $n\in \mathbb N$ be such that $n\omega  \supseteq \supp v$.
Let $\rho>0$ be such that $\rho n < \dist(x_0, \partial \Omega)$. Then $T_\rho( \supp v )\subset \Omega$, and $v $ can be used as test function in \eqref{eq_Kr} if $r<\rho$.
Due to \cref{lem_conv_coeff}, we have $a_r \circ T_r \to \tilde a$ in $L^2(n\omega)$.
This allows us to pass to the limit in the first integral of  \eqref{eq_Kr}:
\[
\int_{T_{r_k}^{-1}(\Omega)} a_{r_k} \circ T_{r_k} \nabla_{x'} K_{r_k} \cdot \nabla_{x'} v \dx'  \to
\int_{\R^d} \tilde a \nabla_{x'} \tilde K \cdot \nabla_{x'} v \dx'.
\]
The convergence of the second integral follows from \cref{cor_lebesgue_lp}, and we have
\[
 \int_\omega ((b-a)\nabla y)\circ T_{r_k} \cdot \nabla_{x'} v \dx' \to (b-a(x_0))\nabla y(x_0) \cdot \int_\omega \nabla_{x'} v \dx' .
\]
Since test functions from $C_c^\infty(\R^d)$ are dense in $\dot H^1(\R^d)$, it follows that $\tilde K$ satisfies \eqref{eq_K}, hence $\tilde K=K$.
\end{proof}

The convergence of $K_r$ to $K$ is even strong.
A similar result can be found in \cite[Proposition 4.1]{Amstutz2021} and \cite[Theorem 4.3]{GanglSturm2020},
where they assumed continuity of $\nabla y$ at $x_0$. In addition, the reference coefficient $a$ was assumed to be constant,
so that (in our notation) $a_r \circ T_r = \tilde a$, compare also \cref{lem_conv_coeff}.
In order to deal with the convergence $a_r \circ T_r \to \tilde a$, we follow an idea of \cite[Theorem 3.1]{CasasFernandez1993}.

\begin{lemma}\label{lem_conv_Kr_K_strong}
Under the assumptions of \cref{lem_conv_Kr_K}, we have
$ K_r \to  K$ in $\dot H^1(\R^d)$.
\end{lemma}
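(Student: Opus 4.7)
My plan is to upgrade the weak convergence $K_r \rightharpoonup K$ in $\dot H^1(\R^d)$ from \cref{lem_conv_Kr_K} to strong convergence via an energy estimate. The starting point is the uniform coercivity of $a_r \circ T_r$:
\[
 \alpha \|\nabla_{x'}(K_r - K)\|_{L^2(\R^d)}^2
 \le \int_{\R^d} a_r \circ T_r \, \nabla_{x'}(K_r - K) \cdot \nabla_{x'}(K_r - K) \dx',
\]
which reduces the claim to showing that the right-hand side tends to zero. Expanding the bilinear form produces four terms, and I would show that each of them converges to $\int_{\R^d} \tilde a \nabla_{x'} K \cdot \nabla_{x'} K \dx'$ (with the appropriate sign).

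The diagonal term $\int_{\R^d} a_r \circ T_r \, \nabla_{x'} K_r \cdot \nabla_{x'} K_r \dx'$ is controlled by testing \eqref{eq_Kr} with $K_r$ itself: it rewrites as $-\int_\omega ((b-a)\nabla y)\circ T_r \cdot \nabla_{x'} K_r \dx'$, and passes to the limit using the strong $L^2(\omega)$-convergence of the source (\cref{cor_lebesgue_lp}) together with the weak convergence $\nabla_{x'} K_r \rightharpoonup \nabla_{x'} K$. Comparing with \eqref{eq_K} tested with $K$, this limit equals $\int_{\R^d} \tilde a \nabla_{x'} K \cdot \nabla_{x'} K \dx'$. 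The three remaining terms all reduce, after adding and subtracting $\tilde a$, to integrals of the form $\int_{\R^d} (a_r \circ T_r - \tilde a) \nabla_{x'} K \cdot w \dx'$ with $w$ equal to $\nabla_{x'} K$ or $\nabla_{x'} K_r$; the strategy is to prove
\[
 a_r \circ T_r \, \nabla_{x'} K \to \tilde a \, \nabla_{x'} K \quad \text{strongly in } L^2(\R^d),
\]
whereupon the weak convergence of $\nabla_{x'} K_r$ and the strong convergence of $\nabla_{x'} K$ close these terms.

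The main obstacle is precisely this global $L^2$-strong convergence, since \cref{lem_conv_coeff} only yields $a_r \circ T_r \to \tilde a$ in $L^2_{\mathrm{loc}}$, while $\nabla_{x'} K$ lies in $L^2(\R^d)$ with no additional integrability. I would overcome this by splitting $\R^d = B(0,R) \cup (\R^d \setminus B(0,R))$. The tail is dominated by the uniform $L^\infty$-bound on $a_r \circ T_r - \tilde a$ times $\|\nabla_{x'} K\|_{L^2(\R^d \setminus B(0,R))}$, which is arbitrarily small for large $R$ since $K \in \dot H^1(\R^d)$. On $B(0,R)$, I would extract from the $L^2_{\mathrm{loc}}$-convergence a subsequence converging almost everywhere and then apply the dominated convergence theorem with a constant multiple of $|\nabla_{x'} K|^2 \in L^1$ as dominant; the standard subsequence-of-subsequence argument restores convergence along the full family $r \searrow 0$. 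Combined with the coercivity estimate, this yields $\nabla_{x'} K_r \to \nabla_{x'} K$ in $L^2(\R^d)$, which is the claim.
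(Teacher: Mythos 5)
Your argument is correct but takes a genuinely different route from the paper's. The paper factorizes the energy density through the Cholesky decomposition of the symmetric part of the coefficient, writing $a\nabla v\cdot\nabla v=|L(a)^T\nabla v|^2$; it then combines the energy identity (obtained by testing \eqref{eq_Kr} with $K_r$) with the weak convergence to show that $L(a_r\circ T_r)^T\nabla_{x'}K_r$ converges to $L(\tilde a)^T\nabla_{x'}K$ both weakly and in $L^2$-norm, hence strongly, and finally recovers $\nabla_{x'}K_r\to\nabla_{x'}K$ by multiplying with $L(a_{r_k}\circ T_{r_k})^{-T}$ together with a pointwise a.e.\ and dominated-convergence argument. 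You instead run the classical coercivity--energy expansion on $\nabla_{x'}(K_r-K)$ directly, which dispenses with the Cholesky machinery; the price is the global strong convergence $(a_r\circ T_r)\nabla_{x'}K\to\tilde a\,\nabla_{x'}K$ in $L^2(\R^d)$, which you correctly reduce to a uniform tail estimate (the $L^\infty$-bound on the coefficients times the vanishing tail of $\nabla_{x'}K$) plus dominated convergence on balls, since \cref{lem_conv_coeff} only gives local convergence. Your treatment of the diagonal term --- test \eqref{eq_Kr} with $K_r$, pass to the limit by strong-times-weak convergence via \cref{cor_lebesgue_lp}, identify the limit through \eqref{eq_K} tested with $K$ --- is exactly the energy-identity step that also drives the paper's proof. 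Overall your route is more elementary and equally rigorous.

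Two details should be made explicit. The paper extends $a_r\circ T_r$ by zero outside $T_r^{-1}(\Omega)$, so your opening coercivity inequality does not hold on all of $\R^d$ as written. Either restrict the quadratic form to $T_r^{-1}(\Omega)$ --- noting that $\nabla_{x'}K_r=0$ a.e.\ outside this set and that $\|\nabla_{x'}K\|_{L^2(\R^d\setminus T_r^{-1}(\Omega))}\to0$ because $T_r^{-1}(\Omega)\supset B(0,\dist(x_0,\partial\Omega)/r)$ exhausts $\R^d$ --- or redefine the extension to take the value $a(x_0)\in\mathcal M$ there; with either fix the four-term expansion goes through. Also, since $a$ need not be symmetric, the two cross terms require the strong convergence of both $(a_r\circ T_r)\nabla_{x'}K$ and $(a_r\circ T_r)^T\nabla_{x'}K$; both follow from the same splitting argument.
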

\begin{proof}
We will use the Cholesky decomposition.
Let $M^+$ denote the set of matrices with positive definite symmetric part, i.e.,
\[
 M^+ := \{ A\in \R^{d,d}: \ \frac12 (A+A^T) \text{ positive definite }\}.
\]
Then $M^+$ is open in $\R^{d,d}$.
We will denote the map from the symmetric part of a matrix to its lower triangular Cholesky factor by $L$,
i.e., $L(A)$ is such that $\frac12 (A+A^T) = L(A)L(A)^T$ for matrices $A \in M^+$. 
The map $A\mapsto L(A)$ is continuous from $M^+$ to $\R^{d,d}$.
In addition, we have $\|L(A)\|_F^2 = \operatorname{tr}(L(A)L(A)^T)=\operatorname{tr}(A)$,
so that the superposition operator induced by $L$ is nicely behaved.
In particular, it is continuous from $L^2(\R^d)$ to $L^4(\R^d)$.

Since all relevant matrices are elements of $\mathcal M$, see \eqref{eq_def_M}, we have the following bound on $L(A)^{-1}$.
For $A\in \mathcal M$, we have that $\|L(A)^{-1}\|_F^2 = \operatorname{tr}( 2 (A+A^T)^{-1}) $.
The eigenvalues of $\frac12(A+A^T)$ are bounded from below by $\alpha$, so that $\|L(A)^{-1}\|_F^2 \le d\alpha^{-1}$.
In addition, $A\mapsto L(A)^{-1}$ is continuous on $\mathcal M$.

By extending $a_r \circ T_r$  with zero to $\R^d$, we can consider it as function on the domain $\R^d$.
We are going to use the equality $a \nabla v \cdot \nabla v = \frac12 (a+a^T)\nabla v \cdot \nabla v = | L(a)^T\nabla v|^2$.
Testing \eqref{eq_Kr} with $K_r$, we get
\[
 \int_{\R^d} |L(a_r \circ T_r)^T \nabla_{x'} K_r|^2 \dx' = - \int_\omega ((b-a)\nabla y)\circ T_r \cdot \nabla_{x'} K_r \dx'.
\]
Due to \cref{cor_lebesgue_lp} and the weak convergence $\nabla_{x'} K_r \rightharpoonup \nabla_{x'}K$ in $L^2(\R^d)$,
the right-hand side converges as
\[\begin{split}
 - \int_\omega ((b-a)\nabla y)\circ T_r \cdot \nabla_{x'} K_r \dx'
 & \to -(b-a(x_0))\nabla y(x_0) \cdot \int_\omega \nabla_{x'} K \dx'\\
 &=  \int_{\R^d} |L(\tilde a)^T \nabla_{x'} K|^2 \dx'
\end{split}\]
where we have used \eqref{eq_K}. Hence, $ L(a_r \circ T_r)^T \nabla_{x'} K_r$ is bounded in $L^2(\R^d)$, and in addition $\|L(a_r \circ T_r)^T \nabla_{x'} K_r\|_{L^2(\R^d)}$
converges to $\|L(\tilde a)^T \nabla_{x'} K\|_{L^2(\R^d)}$.

Due to \cref{lem_conv_coeff}, $L(a_r \circ T_r)^T$ converges in $L^4(n\omega)$ to $L(\tilde a)^T$.
Together with the weak convergence $\nabla_{x'} K_r \rightharpoonup \nabla_{x'} K$, we get that the weak limit of
$L(a_r \circ T_r)^T \nabla_{x'} K_r$ in $L^2(\R^d)$ is equal to $L(\tilde a)^T \nabla_{x'} K$.
Due to the convergence of the norms, it follows
\[
L(a_r \circ T_r)^T \nabla_{x'} K_r \to  L(\tilde a)^T \nabla_{x'} K  \text{ in }L^2(\R^d).
\]
We will prove the desired convergence with the celebrated dominated convergence theorem.
Let $r_k\searrow0$ be a sequence.
Then after extracting a subsequence if necessary, we have the pointwise convergence
$L(a_{r_k} \circ T_{r_k})^T \nabla_{x'} K_{r_k} \to  L(\tilde a)^T \nabla_{x'} K$ a.e.\@ on $\R^d$.
Applying a diagonal sequence argument to the result of \cref{lem_conv_coeff}, we can extract another subsequence (still denoted the same)
such that $a_{r_k} \circ T_{r_k} \to \tilde a$ a.e.\@ on $\R^d$.
Using the continuity of $a \mapsto L(a)^{-T}$, we get the pointwise a.e.\@ convergence
\[
 \nabla_{x'} K_{r_k} = L(a_{r_k} \circ T_{r_k})^{-T} L(a_{r_k} \circ T_{r_k})^T \nabla_{x'} K_{r_k}
 \to   L(\tilde a)^{-T} L(\tilde a)^T \nabla_{x'} K =\nabla_{x'} K.
\]
Since the $L^\infty$-norms of $L(a_{r_k} \circ T_{r_k})^{-T}$
are uniformly bounded, the claim follows with the dominated convergence theorem applied to
\begin{multline*}
\nabla_{x'} K_{r_k} - \nabla_{x'} K =  L(a_{r_k} \circ T_{r_k})^{-T} ( L(a_r \circ T_r)^T \nabla_{x'} K_r -  L(\tilde a)^T \nabla_{x'} K) \\
+( L(a_{r_k} \circ T_{r_k})^{-T} L(\tilde a)^T - I) \nabla_{x'} K.
\end{multline*}
The convergence for $r \to0$ follows by a standard subsequence-subsequence argument.
\end{proof}

\subsection{Transformed  adjoint equations}

Let us recall the definition of the averaged adjoint $\tilde p_r$,
\[
  \int_\Omega a_r \nabla v \cdot \nabla \tilde p_r \dx = \frac12 \int_\Omega [(y_r-y_d)+(y-y_d)] v \dx \quad \forall v\in H^1_0(\Omega).
\]
Let us define
\begin{equation}\label{eq_def_Qr}
 Q_r(x') := \frac1r (\tilde p_r-p)\circ T_r(x'),
\end{equation}
where $p$ satisfies the adjoint equation
\[
  \int_\Omega a \nabla v \cdot \nabla p \dx = \int_\Omega (y-y_d) v \dx \quad \forall v\in H^1_0(\Omega).
\]
\begin{lemma} \label{lem_Qr_bounded}
Let $x_0$ be a $2$-Lebesgue point of $(b-a) \nabla p$.
 Let $Q_r$ be given by \eqref{eq_def_Qr}. Then there is $M>0$ such that
 \[
 r^2 \|Q_r\|_{L^2(\R^d)}^2 + \|\nabla_{x'} Q_r\|_{L^2( \R^d )}^2 \le M
 \]
 for all $r>0$ small enough.
\end{lemma}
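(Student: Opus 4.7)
The plan is to mimic the proof of \cref{lem_Kr_bounded} by deriving an energy estimate for $\tilde p_r - p$ and then translating it to $Q_r$ via the coordinate transform. First, I would subtract the equation \eqref{eq_def_p} defining $p$ from the equation \eqref{eq_def_aadj} defining $\tilde p_r$; after adding and subtracting $\int_\Omega a_r \nabla v \cdot \nabla p \dx$, this gives
\[
 \int_\Omega a_r \nabla v \cdot \nabla (\tilde p_r - p)\dx = \frac12 \int_\Omega (y_r-y) v \dx - \int_\Omega (a_r-a)\nabla v \cdot \nabla p \dx
\]
for all $v\in H^1_0(\Omega)$. This plays the same role for the averaged adjoint that \eqref{eq_diff_yr} played for $y_r-y$, but with two source terms instead of one.

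Next I would test with $v = \tilde p_r-p$. Coercivity of $a_r$ gives $\alpha\|\nabla(\tilde p_r-p)\|_{L^2(\Omega)}^2$ on the left. On the right, the first source contributes $\tfrac12 \int_\Omega (y_r-y)(\tilde p_r-p)\dx$, which I control by Cauchy--Schwarz, Poincar\'e, and Young's inequality, absorbing a gradient term into the coercivity bound. The second source is supported on $x_0 + r\omega$ (since $a_r-a$ vanishes elsewhere) and equals $\int_{x_0+r\omega}(b-a)\nabla(\tilde p_r-p)\cdot \nabla p\dx$; rewriting this as a dot product against $\nabla(\tilde p_r-p)$ and applying Young's inequality absorbs another gradient term and leaves an integral of the form $\int_{x_0+r\omega}|(b-a)\nabla p|^2 \dx$ up to multiplicative constants.

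At this point I invoke the inputs: the $L^2$-bound from the proof of \cref{lem_Kr_bounded} yields $\|y_r-y\|_{L^2(\Omega)}^2 \le c\, r^d|\omega|$ (for $r$ small enough, using that $x_0$ is also a $2$-Lebesgue point of $|(b-a)\nabla y|$, which we may freely assume by enlarging the null set), and \cref{cor_lebesgue_lp_general} applied to $|(b-a)\nabla p|^2$ at the $2$-Lebesgue point $x_0$ gives
\[
 \int_{x_0 + r\omega} |(b-a) \nabla p|^2 \dx \le c'\, r^d|\omega|
\]
for all sufficiently small $r$. Combining these with the energy inequality and using Poincar\'e once more on $\tilde p_r-p\in H^1_0(\Omega)$ yields
\[
 \|\tilde p_r - p\|_{L^2(\Omega)}^2 + \|\nabla(\tilde p_r-p)\|_{L^2(\Omega)}^2 \le M\, r^d|\omega|.
\]
Finally, applying the coordinate transform $T_r$ and the definition \eqref{eq_def_Qr} of $Q_r$ translates this estimate into the claimed bound $r^2\|Q_r\|_{L^2(\R^d)}^2 + \|\nabla_{x'}Q_r\|_{L^2(\R^d)}^2 \le M$.

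The only delicate point is bookkeeping for the second source term: because $\nabla p$ appears on the non-gradient side of the bilinear form, one must take a little care with the transpose when applying Cauchy--Schwarz so that the gradient factor that gets absorbed into coercivity is indeed $\nabla(\tilde p_r-p)$, leaving behind a weight depending only on $a$, $b$, and $\nabla p$ to which the Lebesgue differentiation theorem can be applied.
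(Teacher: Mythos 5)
Your proposal is correct and follows essentially the same route as the paper: the same difference equation for $\tilde p_r - p$ with the two source terms, testing with $\tilde p_r - p$, absorbing gradient terms by coercivity and Young's inequality, invoking \cref{lem_Kr_bounded} for the $\|y_r-y\|_{L^2}$ contribution and the Lebesgue-point property for the $(b-a)\nabla p$ contribution, and finally transferring the estimate to $Q_r$ via $T_r$. Your explicit remark that one must also place $x_0$ in the (full-measure) set of $2$-Lebesgue points of $|(b-a)\nabla y|$ in order to use \cref{lem_Kr_bounded} is a correct and worthwhile observation that the paper leaves implicit.
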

\begin{proof}
 We proceed as in the proof of \cref{lem_Kr_bounded}. Note that the difference $\tilde p_r-p$ satisfies
 \[
  \int_\Omega a_r \nabla v \cdot \nabla (\tilde p_r-p) \dx + \int_\Omega (a_r-a)\nabla v\cdot \nabla p \dx = \frac12 \int_\Omega (y_r-y) v \dx \quad \forall v\in H^1_0(\Omega).
 \]
Testing this equation with $\tilde p_r-p$, we obtain
 using Poincare inequality and standard estimation procedures
 \[
\|\tilde p_r-p\|_{L^2(\Omega)}^2 + \|\nabla(\tilde p_r-p)\|_{L^2(\Omega)}^2
  \le c \left(  \int_{x_0 + r\omega} |(b-a) \nabla p|^2 \dx + \int_\Omega (y-y_r)^2 \dx\right),
 \]
 where $c$ is independent of $r$.
Due to \cref{thm_lebesgue_lp} and \cref{lem_Kr_bounded}, the right-hand side is bounded by $M r^d |\omega|$.
The proof follows using the definition of $Q_r$.
\end{proof}

\begin{lemma}\label{lem_conv_Qr_Q}
Let $x_0$ be a $2$-Lebesgue point of $(b-a) \nabla p$ and $a$.
We have $Q_r \rightharpoonup Q$ in $\dot H^1(\R^d)$, where $Q$ is the unique solution of
\begin{equation}\label{eq_Q}
  \int_{\R^d} \tilde a\nabla_{x'} v  \cdot \nabla_{x'} Q \dx' + (b-a(x_0)) \int_\omega \nabla_{x'} v \dx'  \cdot \nabla p(x_0) =0 \quad \forall v\in \dot H^1(\R^d).
\end{equation}
\end{lemma}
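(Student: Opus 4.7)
My plan is to mirror the proof of \cref{lem_conv_Kr_K} for $K_r$. By \cref{lem_Qr_bounded}, the family $(Q_r)_{r>0}$ is bounded in $\dot H^1(\R^d)$, so an arbitrary sequence $r_k \searrow 0$ admits a subsequence along which $Q_{r_k} \rightharpoonup \tilde Q$ weakly in $\dot H^1(\R^d)$. Existence and uniqueness of the solution $Q$ of \eqref{eq_Q} follow directly from the Lax--Milgram theorem on $\dot H^1(\R^d)$, using the uniform ellipticity of $\tilde a$ inherited from $\mathcal M$. It therefore suffices to show that every such weak limit $\tilde Q$ satisfies \eqref{eq_Q}, from which the full weak convergence $Q_r \rightharpoonup Q$ follows by the usual subsequence--subsequence argument.

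First I would derive a transformed variational equation for $Q_r$. Subtracting the adjoint equations \eqref{eq_def_aadj} for $\tilde p_r$ and \eqref{eq_def_p} for $p$ and regrouping (as already done in the proof of \cref{lem_Qr_bounded}) yields
\[
\int_\Omega a_r \nabla v \cdot \nabla (\tilde p_r - p) \dx + \int_\Omega (a_r - a) \nabla v \cdot \nabla p \dx = \frac12 \int_\Omega (y_r - y) v \dx
\]
for all $v \in H^1_0(\Omega)$. For $\phi \in C_c^\infty(\R^d)$ and $r$ small enough that $T_r(\supp \phi) \subset \Omega$, I would insert $v(x) = r\,\phi(T_r^{-1}(x))$, apply the change of variables $x = T_r(x')$, and use the identities $(a_r - a)\circ T_r = \chi_\omega (b - a)\circ T_r$ and $(y_r - y)\circ T_r = r K_r$. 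After dividing by $r^d$ this produces
\[
\int_{T_r^{-1}(\Omega)} (a_r \circ T_r)\, \nabla_{x'}\phi \cdot \nabla_{x'} Q_r \dx' + \int_\omega ((b-a)\nabla p)\circ T_r \cdot \nabla_{x'}\phi \dx' = \frac{r^2}{2}\int_{T_r^{-1}(\Omega)} K_r\, \phi \dx'.
\]

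I would then pass to the limit $r_k \to 0$ term by term. In the first term, $a_{r_k}\circ T_{r_k} \to \tilde a$ strongly in $L^2$ on the compact set $\supp\phi$ by \cref{lem_conv_coeff}, while $\nabla_{x'} Q_{r_k} \rightharpoonup \nabla_{x'}\tilde Q$ in $L^2(\R^d)$; together these give convergence to $\int_{\R^d} \tilde a\, \nabla_{x'}\phi \cdot \nabla_{x'}\tilde Q \dx'$. The second term converges to $(b - a(x_0))\nabla p(x_0) \cdot \int_\omega \nabla_{x'}\phi \dx'$ by applying \cref{cor_lebesgue_lp} directly to the vector-valued $L^2$-function $(b-a)\nabla p$, which is exactly where the Lebesgue-point hypothesis on $(b-a)\nabla p$ is used. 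The right-hand side vanishes because, by Cauchy--Schwarz and \cref{lem_Kr_bounded},
\[
\Bigl| \frac{r^2}{2}\int K_r\, \phi \dx' \Bigr| \le \frac{r}{2} \, \|r K_r\|_{L^2(\R^d)}\, \|\phi\|_{L^2(\R^d)} \to 0.
\]
Density of $C_c^\infty(\R^d)$ in $\dot H^1(\R^d)$ then shows $\tilde Q$ solves \eqref{eq_Q}, hence $\tilde Q = Q$. The main technical point is simply the scaling bookkeeping: in contrast to the $K_r$ case, the transformed equation here carries an inhomogeneity of size $r^2 K_r$ coming from the state-residual source, and one needs the extra factor of $r$ beyond the mere boundedness of $rK_r$ (furnished by \cref{lem_Kr_bounded}) to make it vanish in the limit.
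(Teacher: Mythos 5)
Your proof is correct and follows essentially the same route as the paper: transform the difference of the adjoint equations via $T_r$, pass to the limit term by term using \cref{lem_conv_coeff} for the coefficient, \cref{cor_lebesgue_lp} for the source term, and density of $C_c^\infty(\R^d)$ in $\dot H^1(\R^d)$ plus a subsequence--subsequence argument. The only (harmless) difference is how the transformed right-hand side is killed: the paper invokes the weak convergence $rK_r \rightharpoonup 0$ from \cref{cor_rKr_weakly_zero}, whereas you use the extra factor of $r$ together with the $L^2$-bound on $rK_r$ from \cref{lem_Kr_bounded} --- your scaling bookkeeping is, if anything, slightly more explicit than the paper's.
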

\begin{proof}
 The proof follows the lines of the proof of \cref{lem_conv_Kr_K}.
 Unique solvability of \eqref{eq_Q} follows from Lax-Milgram theorem. After applying the coordinate transform, we find that
 $Q_r$ satisfies
\begin{equation}\label{eq_Qr}
   \int_{T_r^{-1}(\Omega)} a_r \circ T_r\nabla_{x'} v \cdot \nabla_{x'} Q_r  \dx' + \int_\omega ((b-a)\circ T_r) \nabla_{x'} v \cdot (\nabla p)\circ T_r\dx' = \int_{T_r^{-1}(\Omega)} rK_r v\dx
 \end{equation}
for all $v\in H^1_0(T_r^{-1}( \Omega))$.

Let $v\in C_c^\infty(\R^d)$ be given. Then for $r>0$ small enough the above equation is fulfilled. Passing to the limit in the integrals on the left-hand side of \eqref{eq_Qr}
can be done exactly as in the proof of \cref{lem_conv_Kr_K}.
The integral on the right-hand side vanishes for $r\searrow0$ due to \cref{cor_rKr_weakly_zero}.
And the claim is proven.
\end{proof}

Strong convergence  $Q_r \to Q$ in $\dot H^1(\R^d)$ can be proven similarly as in \cref{lem_conv_Kr_K_strong}.

\begin{lemma}\label{lem_cont_b_to_Q}
Let $a\in \mathcal M$, $p\in \R^d$ be given.
For $b\in \mathcal M$, let $Q(b) \in \dot H^1(\R^d)$ denote the weak solution of
\begin{equation}\label{eq_Q_b}
  \int_{\R^d} (a + \chi_\omega(b-a) )\nabla_{x'} v  \cdot \nabla_{x'} Q(b) \dx' + (b-a ) \int_\omega \nabla_{x'} v \dx'  \cdot p =0 \quad \forall v\in \dot H^1(\R^d).
\end{equation}
Then the map $b\mapsto Q(b)$ is continuous from $\mathcal M$ to $\dot H^1(\R^d)$.
\end{lemma}
\begin{proof}
Let $b_1,b_2 \in \mathcal M$ be given. Using $v := Q(b_2)$ as test function in the equation \eqref{eq_Q_b} for $Q(b_2)$, we get the a-priori bound
\[
 \alpha \|\nabla Q(b_2)\|_{L^2(\R^d)}^2 \le \|b_2-a\|_2  \|p\|_2 \int_\omega \nabla_{x'} Q(b_2) \dx',
\]
which proves $\alpha \|\nabla Q(b_2)\|_{L^2(\R^d)} \le \|b_2-a\|_2  \|p\|_2 |\omega|^{1/2}$.
Define $Q:=Q(b_1) - Q(b_2)$.
Subtracting equations \eqref{eq_Q_b} for $Q(b_1)$ and $Q(b_2)$ we obtain
\begin{multline*}
  \int_{\R^d} (a + \chi_\omega(b_1-a) )\nabla_{x'} v  \cdot \nabla_{x'} Q \dx'
  +
  \int_\omega (b_1-b_2) \nabla_{x'} v  \cdot \nabla_{x'} Q(b_2) \dx' \\
  +
  (b_1-b_2 ) \int_\omega \nabla_{x'} v \dx'  \cdot p =0 \quad \forall v\in \dot H^1(\R^d).
\end{multline*}
Using $v:=Q$, we get
\[
 \alpha \|  \nabla Q\|_{L^2(\R^d)}^2 \le \|b_1-b_2\|_2  \| \nabla Q\|_{L^2(\R^d)}\left(\|  \nabla Q(b_2)\|_{L^2(\R^d)} + \|p\|_2 |\omega|^{1/2}
 \right),
\]
which proves the claim.
\end{proof}

\subsection{First sensitivity result}

\begin{theorem} \label{thm_exp_general}
Let $\omega\subset \R^d$ be an open bounded set with $0\in \omega$. Let $b\in \mathcal M$. Then for almost all $x_0\in \Omega$,
we have
\begin{multline*}
 \delta J(a;b,x_0,\omega) = \lim_{r\searrow0} \frac{ J(a_r) - J(a) }{ r^d|\omega| }\\
 = -(b-a(x_0)) \nabla y(x_0) \cdot \left(\nabla p(x_0) + \frac1{|\omega|} \int_\omega \nabla_{x'} Q \dx' \right) ,
\end{multline*}
where $Q\in \dot H^1(\R^d)$ is the solution of \eqref{eq_Q}.
The function $Q$ depends solely on $a(x_0)$, $b$, $\nabla p(x_0)$, and $\omega$.
\end{theorem}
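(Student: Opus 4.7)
The plan is to start from the two-term expansion in \cref{lem_expand}, divide by $r^d|\omega|$, and pass to the limit in the two summands independently. The set of admissible $x_0$ will consist of those points that are simultaneously $2$-Lebesgue points of $a$, of $(b-a)\nabla y$, and of $(b-a)\nabla p$, and a $1$-Lebesgue point of the scalar product $(b-a)\nabla y\cdot\nabla p$. Since $a\in L^\infty(\Omega;\R^{d,d})$ and $\nabla y,\nabla p\in L^2(\Omega;\R^d)$, each of these functions lies in $L^1$ or $L^2$, so the intersection of the associated Lebesgue-point sets is of full measure in $\Omega$; this yields the ``for almost all $x_0$'' clause.

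For the first term in the expansion, the relation $a_r - a = \chi_{x_0+r\omega}(b-a)$ reduces it to a classical Lebesgue average,
\[
-\frac{1}{r^d|\omega|}\int_\Omega (a_r-a)\nabla y\cdot\nabla p\dx = -\frac{1}{r^d|\omega|}\int_{x_0+r\omega}(b-a)\nabla y\cdot\nabla p\dx,
\]
and \cref{cor_lebesgue_lp_general} applied to the scalar $L^1$-function $(b-a)\nabla y\cdot\nabla p$ sends this quotient to $-(b-a(x_0))\nabla y(x_0)\cdot\nabla p(x_0)$.

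For the second term I would zoom in via the change of variables $x = T_r(x')$. Using $\dx = r^d\dx'$ and the identity $\nabla_{x'}Q_r = (\nabla(\tilde p_r-p))\circ T_r$ that comes straight from \eqref{eq_def_Qr}, the divided second term becomes
\[
-\frac{1}{|\omega|}\int_\omega \bigl((b-a)\nabla y\bigr)\circ T_r\cdot\nabla_{x'}Q_r\dx'.
\]
Here I would pair the strong convergence $((b-a)\nabla y)\circ T_r \to (b-a(x_0))\nabla y(x_0)$ in $L^2(\omega)$, supplied by \cref{cor_lebesgue_lp}, with the weak convergence $\nabla_{x'}Q_r \rightharpoonup \nabla_{x'}Q$ in $L^2(\R^d)$ granted by \cref{lem_Qr_bounded} and \cref{lem_conv_Qr_Q}. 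The strong--weak duality then produces the limit $-\frac{1}{|\omega|}(b-a(x_0))\nabla y(x_0)\cdot\int_\omega \nabla_{x'}Q\dx'$, and summing the two limits gives the formula stated in the theorem.

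The assertion that $Q$ depends solely on $a(x_0)$, $b$, $\nabla p(x_0)$, and $\omega$ is immediate from \eqref{eq_Q} combined with the definition \eqref{def_atilde} of $\tilde a$, since those are the only data entering the equation for $Q$. The only genuinely non-routine step has already been isolated in the preceding lemmas, namely the identification of the weak limit of $\nabla_{x'}Q_r$ in the absence of any continuity assumption on $a$, $\nabla y$, or $\nabla p$; once \cref{lem_conv_Qr_Q} is granted, what remains is bookkeeping that combines a single change of variables with two Lebesgue-differentiation arguments.
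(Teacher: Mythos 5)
Your proposal is correct and follows essentially the same route as the paper: the same choice of Lebesgue points, the same reduction of the first term to a Lebesgue average, and the same rescaling of the second term followed by a strong--weak pairing of $((b-a)\nabla y)\circ T_r$ with $\nabla_{x'}Q_r$ via \cref{cor_lebesgue_lp} and \cref{lem_conv_Qr_Q}. No gaps.
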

\begin{proof}
 Let $x_0$ be a $1$-Lebesgue point of the integrable functions
 $(b-a) \nabla y\cdot \nabla p$,
 and a $2$-Lebesgue point of the $L^2$-functions
 $(b-a) \nabla y$, $(b-a)\nabla p$, and $a$.
Due to \cref{thm_lebesgue_lp}, the set of such points $x_0$ differs from $\Omega$ by a set of measure zero.

 We will use the expansion from \cref{lem_expand}
 \[
  J(a_r) - J(a) =- \int_\Omega (a_r-a) \nabla y \cdot\nabla p \dx  - \int_\Omega (a_r-a) \nabla y \cdot\nabla (\tilde p_r-p)\dx.
 \]
Due to \cref{thm_lebesgue_lp}, we have
\[
 \lim_{r\searrow0}\frac1{r^d|\omega|}\int_\Omega (a_r-a) \nabla y \cdot \nabla p \dx  = (b-a(x_0)) \nabla y(x_0) \cdot \nabla p(x_0).
\]
Using the coordinate transform $T_r$ and the definition of $Q_r$, cf., \eqref{eq_def_Tr} and \eqref{eq_def_Qr}, we have
\[
  \frac1{r^d|\omega|} \int_\Omega (a_r-a) \nabla y \cdot \nabla (\tilde p_r-p)\dx = |\omega|^{-1} \int_\omega ((a_r-a) \nabla y)\circ T_r \cdot \nabla_{x'} Q_r \dx'.
\]
Due to \cref{cor_lebesgue_lp}, the term $((a_r-a) \nabla y)\circ T_r$ converges in $L^2(\omega)$ to the constant function $(b-a(x_0))\nabla y(x_0)$.
By \cref{lem_conv_Qr_Q}, we have $\nabla_{x'} Q_r \rightharpoonup \nabla_{x'} Q$ in $L^2(\R^d)$. Hence, it follows
\[
 \lim_{r\searrow0} |\omega|^{-1} \int_\omega ((a_r-a) \nabla y)\circ T_r \cdot \nabla_{x'} Q_r \dx' = |\omega|^{-1} (b-a(x_0))\nabla y(x_0) \cdot \int_\omega \nabla_{x'} Q \dx',
\]
and the claim is proven.
\end{proof}

Using the definitions of $Q$ and $K$, we have the following identity
\begin{multline*}
 (b-a(x_0)) \nabla y(x_0) \cdot \int_\omega \nabla_{x'} Q \dx'
 = - \int_{\R^d} \tilde a \nabla_{x'} K  \cdot \nabla_{x'} Q \dx' \\
 = (b-a(x_0))\int_\omega \nabla_{x'} K \dx' \cdot \nabla p(x_0),
\end{multline*}
which can be used to equivalently rewrite the result of \cref{thm_exp_general}.

In case that $a(x_0)$ and $b$ are multiples of the indentity matrix, the formula in \cref{thm_exp_general}
can be written in terms of the so-called polarization matrix $M$, i.e,
\begin{equation}\label{eq_expansion_polariz}
  \delta J(a;b,x_0,\omega) = - \frac1{|\omega|} (b-a(x_0)) \frac{a(x_0)}b \nabla y(x_0) \cdot M \nabla p(x_0).
\end{equation}
The scaling of the polarization matrix is not unique across the literature, here we followed \cite[Theorem 2.1]{BruhlHankeVogelius2003}.
Many details on polarization matrices can be found in the monograph \cite{AmmariKang2007}.

\subsection{Sensitivity matrix}

For fixed $a(x_0)$ and $b$, the mapping $\nabla p(x_0) \mapsto \int_\omega \nabla_{x'} Q \dx'$ is a linear mapping from $\R^d$ to $\R^d$.
Hence, the mapping $ ( \nabla y(x_0), \nabla p(x_0) ) \mapsto (b-a(x_0)) \nabla y(x_0) \cdot \int_\omega \nabla_{x'} Q \dx'$
can be written in terms of a matrix multiplication.

Given $p\in \R^d$ let us define $Q_p \in \dot H^1(\R^d)$ as the solution of
\begin{equation}\label{eq_Qp}
  \int_{\R^d} \tilde a\nabla_{x'} v  \cdot \nabla_{x'} Q \dx' + (b-a(x_0)) \int_\omega \nabla_{x'} v \dx'  \cdot p =0 \quad \forall v\in \dot H^1(\R^d).
\end{equation}
Let us define the matrix $R\in \R^{d,d}$ by
\begin{equation}\label{eq_def_R}
 p^T R y := -(b-a(x_0)) y \cdot \int_\omega \nabla_{x'} Q_p \dx'.
\end{equation}
Of course, $R$ depends on the coefficients $a(x_0)$ and $b$.
The matrix $R$ has the following properties.

\begin{lemma}
 Suppose $a(x_0)$ and $b$ are symmetric. Then $R$ defined in \eqref{eq_def_R} is symmetric.
\end{lemma}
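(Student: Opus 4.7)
The strategy is the standard bilinear symmetry trick: use $Q_y$ as a test function in the equation defining $Q_p$, and $Q_p$ as a test function in the equation defining $Q_y$, then exploit the pointwise symmetry of $\tilde a$ to equate the two resulting bilinear forms.

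First I would verify that the matrix-valued function $\tilde a = \chi_\omega b + \chi_{\omega^c} a(x_0)$ is symmetric almost everywhere on $\R^d$, which is immediate from the symmetry of $a(x_0)$ and $b$. Then, choosing $v = Q_y \in \dot H^1(\R^d)$ in equation \eqref{eq_Qp} for $Q_p$, and $v = Q_p$ in the equation for $Q_y$, I obtain the two identities
\begin{align*}
\int_{\R^d} \tilde a \nabla_{x'} Q_y \cdot \nabla_{x'} Q_p \dx' &= -(b-a(x_0)) \int_\omega \nabla_{x'} Q_y \dx' \cdot p, \\
\int_{\R^d} \tilde a \nabla_{x'} Q_p \cdot \nabla_{x'} Q_y \dx' &= -(b-a(x_0)) \int_\omega \nabla_{x'} Q_p \dx' \cdot y.
\end{align*}
Since $\tilde a(x)$ is symmetric for a.e.\ $x$, the two left-hand sides coincide pointwise (using $A\xi \cdot \eta = \eta^T A \xi = \xi^T A \eta = A\eta \cdot \xi$ for symmetric $A$), so the right-hand sides are equal.

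To finish, I translate the resulting identity into a statement about $R$. Using once more that $b-a(x_0)$ is symmetric, together with commutativity of the Euclidean inner product, I can rewrite
\[
(b-a(x_0)) \int_\omega \nabla_{x'} Q_p \dx' \cdot y = (b-a(x_0)) y \cdot \int_\omega \nabla_{x'} Q_p \dx' = -p^T R y,
\]
and analogously the other side equals $-y^T R p$. Therefore $p^T R y = y^T R p$ for all $p, y \in \R^d$, which is exactly $R = R^T$.

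There is essentially no obstacle here: every step is a direct manipulation once $\tilde a$ has been identified as symmetric and $Q_y, Q_p$ are recognized as admissible test functions for each other's variational equation (both lying in $\dot H^1(\R^d)$ by \cref{lem_conv_Qr_Q} and its analogue). The only subtle point worth noting is that one must be careful to distinguish the symmetry of $\tilde a$ (used to identify the two bilinear integrals) from the symmetry of $b - a(x_0)$ (used at the very end to swap arguments in the boundary term); both hypotheses of the lemma are needed and are used at different places.
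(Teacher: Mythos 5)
Your proof is correct and follows essentially the same route as the paper: introduce $Q_y$, test each variational equation with the other solution, use the pointwise symmetry of $\tilde a$ to identify the two bilinear integrals, and use the symmetry of $b-a(x_0)$ to convert the linear terms into $-p^TRy$ and $-y^TRp$. Your explicit remark that the symmetry hypothesis enters twice (once for $\tilde a$, once for $b-a(x_0)$) is a accurate reading of where the assumption is actually used.
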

\begin{proof}
Let $Q_y\in \dot H^1(\R^d)$ be the solution of
\[
  \int_{\R^d} \tilde a\nabla_{x'} v  \cdot \nabla_{x'} Q \dx' + (b-a(x_0)) \int_\omega \nabla_{x'} v \dx'  \cdot y =0 \quad \forall v\in \dot H^1(\R^d).
\]
Then using the symmetry assumption and testing the equation for $Q_y$ with $Q_p$ results in
\[\begin{split}
p^T R y  &= -(b-a(x_0)) y \cdot \int_\omega \nabla_{x'} Q_p \dx' \\
&=\int_{\R^d} \tilde a\nabla_{x'} Q_p  \cdot \nabla_{x'} Q_y\dx' \\
&=-(b-a(x_0))  \int_\omega \nabla_{x'} Q_y \dx' \cdot p \\
&= y^TRp,
\end{split}\]
which proves the symmetry of $R$.
\end{proof}

\begin{lemma}
 Suppose $b-a(x_0)$ is symmetric. Then $p^TRp \ge0 $ forall $p$, and $p^TRp=0$ if and only if $(b-a(x_0))p=0$.
\end{lemma}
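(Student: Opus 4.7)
The plan is to exploit the variational characterisation of $Q_p$ together with the symmetry of $b-a(x_0)$ to turn $p^TRp$ into an energy integral, and then read off both claims from that representation.

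First I would test the defining equation \eqref{eq_Qp} of $Q_p$ with $v = Q_p$ to obtain
\[
\int_{\R^d} \tilde a\,\nabla_{x'} Q_p \cdot \nabla_{x'} Q_p \dx'
= -(b-a(x_0))\int_\omega \nabla_{x'} Q_p \dx' \cdot p.
\]
Because $b-a(x_0)$ is symmetric, the right-hand side equals $-(b-a(x_0))p \cdot \int_\omega \nabla_{x'} Q_p \dx'$, which by \eqref{eq_def_R} is precisely $p^T R p$. On the other hand, since $\tilde a(x')\in \mathcal M$ almost everywhere (the values of $\tilde a$ are either $b$ or $a(x_0)$, both in $\mathcal M$), the left-hand side is bounded below by $\alpha\|\nabla_{x'} Q_p\|_{L^2(\R^d)}^2$. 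Hence
\[
p^T R p = \int_{\R^d} \tilde a\,\nabla_{x'} Q_p \cdot \nabla_{x'} Q_p \dx' \ge \alpha \|\nabla_{x'} Q_p\|_{L^2(\R^d)}^2 \ge 0,
\]
which establishes the positivity statement. In particular $p^T R p = 0$ forces $\nabla_{x'} Q_p = 0$, and since $Q_p\in \dot H^1(\R^d)$ (where constants are quotiented out), this means $Q_p = 0$ in $\dot H^1(\R^d)$.

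For the equivalence, assume first that $(b-a(x_0))p = 0$. Then $Q=0$ solves \eqref{eq_Qp} trivially, and by the Lax--Milgram uniqueness already used in \cref{lem_conv_Qr_Q} we get $Q_p = 0$, hence $p^T R p = 0$. Conversely, suppose $p^T R p = 0$, so $Q_p = 0$. Substituting into \eqref{eq_Qp} yields
\[
(b-a(x_0))\int_\omega \nabla_{x'} v \dx' \cdot p = 0 \qquad \forall v\in \dot H^1(\R^d).
\]
The remaining step, which I expect to be the only mildly delicate one, is to show that $\{\int_\omega \nabla_{x'} v \dx' : v\in \dot H^1(\R^d)\}$ spans all of $\R^d$: given $i\in\{1,\dots,d\}$, pick a cut-off $\phi\in C_c^\infty(\R^d)$ with $\phi\equiv 1$ on $\omega$ and set $v_i(x') := x_i'\phi(x')$; then $\nabla v_i = \phi\, e_i + x_i'\nabla \phi$ has compact support so $v_i\in \dot H^1(\R^d)$, and $\nabla v_i = e_i$ on $\omega$, giving $\int_\omega \nabla_{x'} v_i \dx' = |\omega|\, e_i$. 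Feeding $v=v_i$ into the identity above yields $|\omega|\,(b-a(x_0))e_i\cdot p = 0$ for each $i$, and by the symmetry of $b-a(x_0)$ this means $(b-a(x_0))p = 0$, completing the proof.
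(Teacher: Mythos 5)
Your proof is correct and follows essentially the same route as the paper: test \eqref{eq_Qp} with $v=Q_p$, use the symmetry of $b-a(x_0)$ to identify $p^TRp$ with the energy $\int_{\R^d}\tilde a\,\nabla_{x'}Q_p\cdot\nabla_{x'}Q_p\dx'\ge\alpha\|\nabla_{x'}Q_p\|_{L^2}^2$, and in the degenerate case extract $(b-a(x_0))p=0$ from the surviving linear term. Your final step is in fact slightly more careful than the paper's, which tests with the linear function $v(x')=p^T(b-a(x_0))x'$ (not literally an element of $\dot H^1(\R^d)$ since its gradient is a nonzero constant); your cut-off functions $v_i=x_i'\phi$ are genuinely admissible and yield the same conclusion.
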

\begin{proof}
Take $p\in \R^d$.
Since $b-a(x_0)$ is symmetric, it follows
\[\begin{split}
   -(b-a(x_0)) p\cdot \int_\omega \nabla_{x'} v
    =-(b-a(x_0)) \int_\omega \nabla_{x'} v \dx'  \cdot p
    = \int_{\R^d} \tilde a\nabla_{x'} v  \cdot \nabla_{x'} Q_p \dx'
\end{split}\]
for all $v\in \dot H^1(\R^d)$,
which implies
\[\begin{split}
p^T R p  &=-(b-a(x_0)) p \cdot \int_\omega \nabla_{x'} Q_p \dx' \\
&=\int_{\R^d} \tilde a\nabla_{x'} Q_p  \cdot \nabla_{x'} Q_p\dx' \ge  \alpha \|\nabla_{x'} Q_p\|_{L^2(\R^d)}^2\ge  0
\end{split}\]
since $\tilde a(x') \in \mathcal M$ for almost all $x'$.

Clearly, $p^T R p =0$ if $(b-a(x_0)) p=0$.
Let us assume $p^T R p =0$. This implies $\nabla_{x'}Q_p=0$ and $(b-a(x_0)) \int_\omega \nabla_{x'} v \dx'  \cdot p =0$ forall $v\in \dot H^1(\R^d)$.
Setting $v(x') := p^T(b-a(x_0))x'$ proves $(b-a(x_0))p=0$, and  definiteness of $R$ is established as claimed.
\end{proof}

For anisotropic variants of the polarization matrix introduced in \eqref{eq_expansion_polariz}, we refer to \cite[Section 4.12]{AmmariKang2007} and \cite{KangKim2007}.
%


\section{Perturbations on balls and ellipses}
\label{sec_specific_perturbations}

Here, we will work in the isotropic case, that is,
$a(x_0)$ and $b$ are assumed to be positive multiples of the identity matrix.
In this case, we get explicit expressions for the variation $\delta J(a;b,x_0,\omega)$.
With a little abuse of notation, we will assume
\[
 a(x_0), b\in \R, \ a(x_0),b \ge \alpha.
\]
Polarization matrices for the anisotropic case and $d=2,3$ were computed in \cite{KangKim2007}.

\subsection{Balls}

Here, we will set
\[
 \omega := B(0,1).
\]
Interestingly, in this case the solutions of \eqref{eq_K} and \eqref{eq_Q} can be computed explicitly.

\begin{lemma}\label{lem_explicit_KQ}
Let $\omega := B(0,1)$.
Let $g\in \R^d$ be given, $a(x_0),b\in \R$. Then the solution $G\in \dot H^1(\R^d)$ of the equation
 \[
  \int_{\R^d} \tilde a \nabla_{x'} G \cdot \nabla_{x'} v \dx' + g \cdot \int_\omega \nabla_{x'} v \dx' =0 \quad \forall v\in \dot H^1(\R^d).
  \]
  is given  by
  \[
   G(x'):= g \cdot x'  \frac1{\max(1, |x'|^d)} \frac{-1}{b+a(x_0)(d-1)}.
  \]
\end{lemma}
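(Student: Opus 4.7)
The plan is to exhibit $G$ via an ansatz and verify that it satisfies the weak equation; uniqueness of the solution then identifies it with the solution given by the formula. Uniqueness comes from the Lax--Milgram theorem applied to the coercive bilinear form $(u,v)\mapsto\int_{\R^d}\tilde a\,\nabla u\cdot\nabla v\dx'$ on $\dot H^1(\R^d)$, where coercivity uses $\tilde a(x')\ge\min(a(x_0),b)>0$ almost everywhere. So it suffices to verify the candidate.

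Motivated by the fact that the source is the constant vector $g$ on $\omega$ and vanishes outside, I look for a solution of the form $G=C\Phi$ with $\Phi(x')=g\cdot x'$ on $|x'|\le 1$ and $\Phi(x')=(g\cdot x')/|x'|^d$ on $|x'|\ge 1$. The inside expression is linear, hence harmonic. The outside expression is a linear combination of $\partial_i|x'|^{2-d}$ (resp.\ $\partial_i\log|x'|$ for $d=2$), i.e., a first derivative of the fundamental solution of the Laplacian, hence harmonic on $\R^d\setminus\{0\}$ by direct differentiation. The two pieces agree on $|x'|=1$, so $G$ is continuous across $\partial\omega$ and lies in $H^1_{\mathrm{loc}}(\R^d)$. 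A short calculation yields $|\nabla G(x')|=O(|x'|^{-d})$ outside the ball, so $\int_{|x'|\ge 1}|\nabla G|^2\dx'\sim\int_1^\infty r^{-d-1}\,dr<\infty$, which gives $G\in\dot H^1(\R^d)$.

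To verify the weak equation, I integrate by parts separately on $\omega$ and on $\omega^c$. Since $\Delta G=0$ in each region, the bulk terms vanish and only a boundary contribution on $\partial\omega=S^{d-1}$ (with outward normal $n(x')=x'$) remains. Rewriting the source term via $g\cdot\int_\omega\nabla_{x'} v\dx'=\int_{S^{d-1}}(g\cdot n)v\,dS$, the weak equation collapses to the transmission identity
\[
\bigl[b\,\partial_n G|_\omega-a(x_0)\,\partial_n G|_{\omega^c}\bigr]+g\cdot n=0\qquad\text{on }S^{d-1}.
\]
A direct computation (using $\nabla G=Cg$ inside and the quotient rule outside) gives $\partial_n G|_\omega=C(g\cdot n)$ and $\partial_n G|_{\omega^c}=C(1-d)(g\cdot n)$ on $S^{d-1}$. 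Substituting and cancelling $g\cdot n$ reduces everything to the scalar equation $C\bigl[b+a(x_0)(d-1)\bigr]+1=0$, which forces $C=-1/[b+a(x_0)(d-1)]$, matching the claim. The main delicate point is the careful bookkeeping of signs and normal orientations in the transmission identity across $\partial\omega$; all other computations---harmonicity of the outer piece, integrability of $\nabla G$, and evaluation of the two normal traces---are elementary.
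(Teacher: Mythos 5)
Your proposal is correct and follows essentially the same route as the paper: verify the explicit candidate by integrating by parts separately on $\omega$ and $\omega^c$, use harmonicity of each piece so that only the boundary terms on $\partial\omega$ survive, match them against $g\cdot\int_\omega\nabla v\dx'=\int_{\partial\omega}(g\cdot x')v\,\mathrm{d}S$, and check the decay $|\nabla G|=O(|x'|^{-d})$ to confirm $G\in\dot H^1(\R^d)$. Your explicit formulation of the transmission condition and the remark on Lax--Milgram uniqueness are tidy additions, but the computation is the same.
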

\begin{proof}
Note that $\Delta G(x')=0$ for all $x'$ with $|x'|\ne1$.
Define
\[
 G_0(x') =  g \cdot x'  \frac1{\max(1, |x'|^d)} .
\]
Let $v\in C_c^\infty(\R^d)$.
Then
\[
\begin{aligned}
\int_{\R^d} \tilde a \nabla_{x'} G_0 \cdot\nabla_{x'} v \dx'
&= \int_{\omega^c} a(x_0) \nabla_{x'} G_0 \cdot\nabla_{x'} v \dx' + \int_\omega b  \nabla_{x'} G_0\cdot \nabla_{x'} v \dx'\\
&= \int_{\partial \omega^c}a(x_0)\nabla_{x'} G_0\cdot (-x')\, v \dx' + \int_{\omega} b  \nabla_{x'} G_0 \cdot\nabla_{x'} v \dx'\\
&= \int_{\partial \omega^c}a(x_0)(g-d(g\cdot x')x')\cdot (-x')\, v \dx' + \int_{\partial\omega} b  g\cdot x' v \dx'\\
&= \int_{\partial \omega}(b+a(x_0)(d-1)) g\cdot x' v \dx'
\end{aligned}
\]
and
\[
 g \cdot \int_\omega \nabla_{x'} v \dx' = \int_\omega \nabla_{x'}(g\cdot x')\cdot\nabla_{x'}v \dx' = 0 + \int_{\partial\omega} g\cdot x' v \dx.
\]
And $G$ satisfies the integral equation when tested with test functions from $C_c^\infty(\R^d)$.
Let us argue that $G\in \dot H^1(\R^d)$. For $d>1$, we have $|\nabla G| \sim |x|^{-d}$ for $|x|>1$.
For $d=1$, it holds $\nabla G(x)=0$ for $|x|>1$.
It follows $\nabla G \in L^2(\R^d)$.
By density, $G$ satisfies the equation for all test functions.
\end{proof}

\begin{theorem} \label{thm_exp_balls}
Let $a(x_0),b\in \mathcal M$ be multiples of the identity matrix.
Then for almost all $x_0\in \Omega$,
we have
\[
 \delta J(a;b,x_0,B(0,1) ) = -\nabla y(x_0) \cdot \nabla p(x_0) \frac{a(x_0)d}{b+a(x_0)(d-1)} ( b-a(x_0) ).
\]
\end{theorem}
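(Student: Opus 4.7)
The strategy is to combine the general expansion in \cref{thm_exp_general} with the explicit formula for the auxiliary adjoint provided by \cref{lem_explicit_KQ}. Since everything reduces to a closed-form computation, the main challenge is simply matching the notation and performing a careful algebraic simplification.

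First, I would specialise equation \eqref{eq_Q} to $\omega = B(0,1)$ and to the isotropic case $a(x_0), b \in \R$. Because $a(x_0)$ and $b$ are scalars, the vector
\[
 g := (b-a(x_0))\nabla p(x_0) \in \R^d
\]
plays exactly the role of the datum $g$ in \cref{lem_explicit_KQ}. Applying that lemma directly, I obtain the explicit representation
\[
 Q(x') \;=\; -\frac{1}{b+a(x_0)(d-1)}\,(b-a(x_0))\,\nabla p(x_0)\cdot x'\,\frac{1}{\max(1,|x'|^d)}.
\]

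Next, I would evaluate the averaged gradient appearing in \cref{thm_exp_general}. On $\omega = B(0,1)$ we have $|x'|\le 1$, so $\max(1,|x'|^d)=1$ and $Q$ restricted to $\omega$ is linear in $x'$. Its gradient is therefore constant on $\omega$, and
\[
 \frac{1}{|\omega|}\int_\omega \nabla_{x'} Q \dx' \;=\; -\frac{b-a(x_0)}{b+a(x_0)(d-1)}\,\nabla p(x_0).
\]

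Finally, I would plug this into the formula of \cref{thm_exp_general}. Adding $\nabla p(x_0)$ and reducing to a common denominator gives
\[
 \nabla p(x_0) + \frac{1}{|\omega|}\int_\omega \nabla_{x'} Q \dx'
 \;=\; \frac{b+a(x_0)(d-1)-(b-a(x_0))}{b+a(x_0)(d-1)}\,\nabla p(x_0)
 \;=\; \frac{a(x_0)\,d}{b+a(x_0)(d-1)}\,\nabla p(x_0),
\]
and substituting this into $\delta J(a;b,x_0,\omega) = -(b-a(x_0))\nabla y(x_0)\cdot\bigl(\nabla p(x_0) + \tfrac{1}{|\omega|}\int_\omega \nabla_{x'}Q\dx'\bigr)$ yields the claimed identity. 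There is no real obstacle: the analytic content sits in \cref{thm_exp_general} and the PDE work is done in \cref{lem_explicit_KQ}; only the short algebraic simplification above remains.
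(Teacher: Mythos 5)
Your proposal is correct and follows essentially the same route as the paper: apply \cref{lem_explicit_KQ} with $g=(b-a(x_0))\nabla p(x_0)$ to get the explicit linear form of $Q$ on $\omega=B(0,1)$, observe that $\nabla_{x'}Q$ is constant there, and substitute the resulting averaged gradient into \cref{thm_exp_general}. The algebraic simplification $b+a(x_0)(d-1)-(b-a(x_0))=a(x_0)d$ matches the paper's computation exactly.
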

\begin{proof}
Due to \cref{lem_explicit_KQ} and \eqref{eq_Q}, we have for $x'\in \omega$
\[
 Q(x') = (b-a(x_0)) \nabla p(x_0) \cdot x'   \cdot \frac{-1}{b+a(x_0)(d-1)},
\]
which implies
\[
\int_\omega \nabla_{x'} Q_r \dx = (b-a(x_0)) \nabla p(x_0) |\omega| \cdot \frac{-1}{b+a(x_0)(d-1)}.
\]
Using this identity in the result of \cref{thm_exp_general}, we find
\[\begin{split}
  \lim_{r\to0} \frac1{|B(r)|} (J(y_r) - J(y) ) &=-(b-a(x_0))\nabla y(x_0)\cdot \nabla p(x_0) \cdot \left( 1 - \frac{b-a(x_0)}{b+a(x_0)(d-1)}\right) \\
  &=-\nabla y(x_0) \cdot\nabla p(x_0) \frac{a(x_0)d}{b+a(x_0)(d-1)} ( b-a(x_0) ).
\end{split}\]
\end{proof}

This result coincides with those of \cite[Theorem 6.1]{Amstutz2006}, \cite[Theorem 3.1]{CarpioRapun2008},
which were derived under much stronger assumptions.

\subsection{Ellipses}
\label{sec_ellipses}

Now let $H \in \R^{2,2}$ be symmetric, positive definite with $\det H=1$.
Then
\begin{equation}\label{eq_def_ellips}
\omega = \{ x\in \R^2: \ x^THx \le1 \}
\end{equation}
is an ellipse with $|\omega|=|B(0,1)|$.
We will now study perturbations on elliptic sets (instead of on balls). We restrict to the 2d case, where explicit formulas for the polarization matrix are available
from \cite{BruhlHankeVogelius2003, KangKim2007}.
Such explicit formulas are available for $d=3$ as well \cite{KangKim2007}, but are much more technical to analyze.
At the end of the section, we will argue that it is enough to consider elliptic perturbations.

Here, we have the following result  for axis-aligned ellipses.
It was derived in \cite{BruhlHankeVogelius2003} using an explicitly constructed solution to the equation \eqref{eq_Q} in terms of elliptic coordinates.

\begin{lemma}\label{lem_exp_ell_diag}
Let $H =\operatorname{diag}(\lambda, \lambda^{-1}) \in \R^{2,2}$ with $\lambda>0$ be given, and define $\omega$ as in \eqref{eq_def_ellips}.
Let $a(x_0),b\in \mathcal M$ be multiples of the identity matrix.
Then for almost all $x_0\in \Omega$,
we have
\[
 \delta J(a;b,x_0, \omega ) = -(b-a(x_0))  a(x_0) \nabla p(x_0) \cdot
   \begin{pmatrix}
	\frac{ \lambda+1}{a(x_0) \lambda + b }& 0 \\
      0& \frac{ \lambda+1}{a(x_0)  + b \lambda}\\
     \end{pmatrix} \nabla y(x_0).
\]
\end{lemma}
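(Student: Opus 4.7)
The strategy is to apply Theorem \ref{thm_exp_general}, which reduces the task to computing the quantity $\frac{1}{|\omega|}\int_\omega \nabla_{x'} Q \dx'$ for the solution $Q \in \dot H^1(\R^2)$ of \eqref{eq_Q} with $\tilde a = \chi_\omega b + \chi_{\omega^c} a(x_0)$. Since the data in \eqref{eq_Q} depends linearly on $\nabla p(x_0)$, I would write $\nabla p(x_0) = g_1 e_1 + g_2 e_2$ and decompose $Q = g_1 Q^{(1)} + g_2 Q^{(2)}$, where $Q^{(i)}$ solves \eqref{eq_Q} with $\nabla p(x_0)$ replaced by the unit vector $e_i$. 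The axis-aligned ellipse $\omega$ is invariant under each of the reflections $x_i \mapsto -x_i$, and $\tilde a$ inherits this symmetry, so a standard symmetry argument shows that $Q^{(i)}$ is odd in $x_i$ and even in the other coordinate. Hence $\int_\omega \nabla_{x'} Q^{(i)} \dx'$ is parallel to $e_i$, and the linear map $\nabla p(x_0) \mapsto \frac{1}{|\omega|}\int_\omega \nabla_{x'} Q \dx'$ is represented by a diagonal matrix, which I would write as $-\diag(\gamma_1, \gamma_2)$.

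To compute the entries $\gamma_i$, I would use integration by parts in \eqref{eq_Q} to reformulate the problem for $Q^{(i)}$ as the transmission problem on $\R^2$:
\[
 \Delta Q^{(i)} = 0 \text{ in } \omega \text{ and in } \omega^c, \qquad Q^{(i)} \text{ continuous across } \partial \omega,
\]
\[
 b\,\partial_n Q^{(i)}\big|_{\text{int}} - a(x_0)\,\partial_n Q^{(i)}\big|_{\text{ext}} = -(b - a(x_0))\, n_i \text{ on } \partial \omega,
\]
together with decay of $\nabla_{x'} Q^{(i)}$ at infinity enforced by $Q^{(i)} \in \dot H^1(\R^2)$. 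Following Br\"uhl, Hanke and Vogelius \cite{BruhlHankeVogelius2003}, I would use an ansatz in elliptic coordinates adapted to the semi-axes of $\omega$: a harmonic polynomial of degree one inside $\omega$ and a suitable multipolar harmonic function outside. Matching the continuity and flux-jump conditions on $\partial \omega$ determines the coefficients explicitly and yields
\[
 \gamma_1 = \frac{b - a(x_0)}{a(x_0)\lambda + b}, \qquad \gamma_2 = \frac{b - a(x_0)}{a(x_0) + b\lambda}.
\]

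Substituting $\frac{1}{|\omega|}\int_\omega \nabla_{x'} Q \dx' = -\diag(\gamma_1, \gamma_2) \nabla p(x_0)$ into Theorem \ref{thm_exp_general} and using the symmetry of the resulting matrix gives
\[
 \delta J(a;b,x_0,\omega) = -(b - a(x_0))\, \nabla p(x_0) \cdot \bigl( I - \diag(\gamma_1, \gamma_2) \bigr)\, \nabla y(x_0),
\]
and the elementary identities $1 - \gamma_1 = a(x_0)(\lambda+1)/(a(x_0)\lambda + b)$ and $1 - \gamma_2 = a(x_0)(\lambda+1)/(a(x_0) + b\lambda)$ then reproduce the stated formula. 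The main obstacle is the explicit solution of the transmission problem: while the reduction to a diagonal structure by symmetry is straightforward and the final algebra is trivial, the closed-form expressions for $\gamma_1, \gamma_2$ rely on the elliptic-coordinate construction of \cite{BruhlHankeVogelius2003}, which I would invoke rather than re-derive. The only point that needs care is checking that the explicit solution one obtains indeed lies in $\dot H^1(\R^2)$, which follows from the $O(|x'|^{-1})$ decay of the gradient of the exterior multipolar expansion in dimension two.
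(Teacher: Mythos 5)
Your overall route is the same as the paper's: reduce everything to \cref{thm_exp_general} and then obtain the explicit diagonal matrix from the Br\"uhl--Hanke--Vogelius computation of the polarization tensor for an axis-aligned ellipse (the paper quotes their formula (A.4) directly, while you reconstruct the intermediate transmission problem and the symmetry argument for diagonality, which is a sound and slightly more self-contained presentation of the same idea). However, the proof as written does not close, because your value of $\gamma_2$ is wrong and the ``elementary identity'' you use in the last step is false. With $\gamma_2=\frac{b-a(x_0)}{a(x_0)+b\lambda}$ one gets
\[
1-\gamma_2=\frac{a(x_0)+b\lambda-(b-a(x_0))}{a(x_0)+b\lambda}=\frac{2a(x_0)-b+b\lambda}{a(x_0)+b\lambda},
\]
which equals the required $\frac{a(x_0)(\lambda+1)}{a(x_0)+b\lambda}$ only when $\lambda=1$ or $a(x_0)=b$. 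The correct coefficient is
\[
\gamma_2=\frac{\lambda\,(b-a(x_0))}{a(x_0)+b\lambda},
\]
i.e., your expression is off by a factor of $\lambda$ in the numerator; your $\gamma_1$ is correct. This is consistent with the anisotropy of the ellipse: the two directions cannot produce coefficients that differ only by swapping $a(x_0)\lambda+b$ for $a(x_0)+b\lambda$ in the denominator while keeping the same numerator, as a check at $\lambda\to\infty$ (degenerate sliver) or against the polarization matrix $M=|\omega|\,\diag\bigl(\tfrac{b(\lambda+1)}{a(x_0)\lambda+b},\tfrac{b(\lambda+1)}{a(x_0)+b\lambda}\bigr)$ shows. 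Once $\gamma_2$ is corrected, the remaining algebra and the $\dot H^1(\R^2)$ membership argument go through as you describe.
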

\begin{proof}
From \cref{thm_exp_general}, we get
\[
  \delta J(a;b,x_0, \omega ) = - (b-a(x_0)) \nabla y(x_0) \cdot \left( \nabla p(x_0)  - \int_\omega \nabla_{x'} Q \dx' \right),
\]
where $Q$ solves \eqref{eq_Q}. Using the polarization matrix $M$, cf., \eqref{eq_expansion_polariz}, we can write
\[
    \delta J(a;b,x_0, \omega ) = - \frac1{|\omega|}  (b-a(x_0)) \frac{a(x_0)}b  \nabla y(x_0) \cdot M \nabla p(x_0).
\]
The matrix $M$ was computed in \cite{BruhlHankeVogelius2003}.
Using \cite[eq. (A.4)]{BruhlHankeVogelius2003} with $\kappa:=b$, $\gamma:=a(x_0)$, $a:=\lambda^{1/2}$, $b:=\lambda^{-1/2}$, $|\omega|=1$,
we find
\[
 M
 = |\omega| \begin{pmatrix}
	\frac{\kappa (a+b)}{\gamma a + \kappa b}& 0 \\
      0& \frac{\kappa (a+b)}{\gamma b + \kappa a}\\
     \end{pmatrix}
 = \begin{pmatrix}
	\frac{b (\lambda+1)}{a(x_0) \lambda + b }& 0 \\
      0& \frac{b (\lambda+1)}{a(x_0)  + b \lambda}\\
     \end{pmatrix},
\]
and the claim follows.
\end{proof}

Note that in the case of $H=I_2$ and $\lambda=\lambda^{-1}=1$ this reduces to the result of \cref{thm_exp_balls}.

\begin{corollary}\label{cor_exp_ell_general}
 Let $H =R^T\operatorname{diag}(\lambda, \lambda^{-1})R \in \R^{2,2}$ with $\lambda>0$, and $R^TR=I_2$, and define $\omega$ as in \eqref{eq_def_ellips}.
Let $a(x_0),b\in \mathcal M$ be multiples of the identity matrix.
Then for almost all $x_0\in \Omega$,
we have
\begin{multline*}
 \delta J(a;b,x_0, \omega ) = \\
 -(b-a(x_0))  a(x_0) \nabla p(x_0) \cdot R^T
    \begin{pmatrix}
	\frac{ \lambda+1}{a(x_0) \lambda + b }& 0 \\
      0& \frac{ \lambda+1}{a(x_0)  + b \lambda}\\
     \end{pmatrix} R \nabla y(x_0).
 \end{multline*}
 for almost all $x_0$.
\end{corollary}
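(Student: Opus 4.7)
The plan is to reduce to the axis-aligned case already treated in \cref{lem_exp_ell_diag} by exploiting the rotational covariance of equation \eqref{eq_Q}. The isotropy assumption is decisive: because $a(x_0)$ and $b$ are scalar multiples of the identity, the coefficient $\tilde a$ from \eqref{def_atilde} satisfies $\tilde a \circ R^T = \tilde a_0$, where $\tilde a_0$ is the corresponding coefficient built from the axis-aligned ellipse $\omega_0 := \{z \in \R^2 : z^T \operatorname{diag}(\lambda,\lambda^{-1}) z \le 1\}$. Orthogonality of $R$ also gives $\omega = R^T \omega_0$ and $|\omega| = |\omega_0|$.

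First I would perform the substitution $x' = R^T z$ in \eqref{eq_Q} and show that $\tilde Q(z) := Q(R^T z)$ is the unique $\dot H^1(\R^2)$-solution of \eqref{eq_Q} on $\omega_0$ with $R \nabla p(x_0)$ in place of $\nabla p(x_0)$. Indeed $\nabla_{x'} Q(R^T z) = R^T \nabla_z \tilde Q(z)$, and the same transformation applies to test functions $v$, so $RR^T = I_2$ reduces the scalar integrand $\tilde a \, \nabla_{x'} v \cdot \nabla_{x'} Q$ to $\tilde a_0 \, \nabla_z \tilde v \cdot \nabla_z \tilde Q$, while the source term becomes $(b-a(x_0)) \int_{\omega_0} \nabla_z \tilde v \,\mathrm{d}z \cdot R\nabla p(x_0)$. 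Unique solvability in $\dot H^1(\R^2)$ (Lax--Milgram) then identifies $\tilde Q$ as claimed.

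Transforming the averaged gradient back, $\int_\omega \nabla_{x'} Q \dx' = R^T \int_{\omega_0} \nabla_z \tilde Q \,\mathrm{d}z$, so \cref{thm_exp_general} combined with $R^T R = I_2$ and $|\omega| = |\omega_0|$ yields
\[
\delta J(a;b,x_0,\omega) = -(b-a(x_0)) R \nabla y(x_0) \cdot \Bigl( R \nabla p(x_0) + \tfrac{1}{|\omega_0|} \int_{\omega_0} \nabla_z \tilde Q \,\mathrm{d}z \Bigr).
\]
The right-hand side is exactly $\delta J(a;b,x_0,\omega_0)$ evaluated at the rotated gradients $R\nabla y(x_0)$ and $R\nabla p(x_0)$. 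Applying \cref{lem_exp_ell_diag} and rewriting $R\nabla p(x_0) \cdot D' R\nabla y(x_0) = \nabla p(x_0) \cdot R^T D' R \nabla y(x_0)$, where $D'$ denotes the diagonal matrix from \cref{lem_exp_ell_diag}, gives the claimed formula.

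No genuine obstacle is expected; the argument is a careful rotational change of variables plus one invocation of the diagonal result. The only point warranting attention is the invariance $\tilde a \circ R^T = \tilde a_0$, which rests precisely on $a(x_0)$ and $b$ being multiples of the identity; without isotropy the change of variables would produce the conjugate $R \tilde a_0 R^T$ in place of $\tilde a_0$ and the reduction would fail.
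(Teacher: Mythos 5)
Your argument is correct. It follows the same strategy as the paper --- reduce to the axis-aligned ellipse via the rotation $R$ --- but the execution differs in a way worth noting: the paper simply invokes the transformation law $M_\omega = R^T M_{\omega'}R$ for polarization matrices from the literature (Ammari--Kang, Lemma 4.5), whereas you derive the covariance directly by the substitution $x'=R^Tz$ in \eqref{eq_Q}, checking that $\tilde Q(z)=Q(R^Tz)$ solves the axis-aligned problem with source $R\nabla p(x_0)$ and that the averaged gradient transforms as $\int_\omega\nabla_{x'}Q\dx'=R^T\int_{\omega_0}\nabla_z\tilde Q\d z$. Your route is self-contained and makes explicit exactly where isotropy of $a(x_0)$ and $b$ enters (the scalar coefficient commutes with $R$, so no conjugate $R\tilde a_0R^T$ appears), at the cost of a page of bookkeeping that the citation avoids. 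One small point of hygiene: when you ``apply \cref{lem_exp_ell_diag} at the rotated gradients,'' you are really using that the identity proved there is purely algebraic in the two gradient vectors (it is a statement about the solution operator of \eqref{eq_Q} for arbitrary source and test vectors), not the lemma as literally stated for $\nabla y(x_0)$ and $\nabla p(x_0)$; this is harmless but should be said.
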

\begin{proof}
Define $\omega' := \{x: \ x^T\operatorname{diag}(\lambda, \lambda^{-1})x\le 1\}$,
which implies $\omega = R^T \omega'$.
 If $M_\omega$ and $M_{\omega'}$ are the polarization matrices associated
 to $\omega$ and $\omega'$, then it holds $M_\omega = R^TM_{\omega'} R$ by \cite[Lemma 4.5]{AmmariKang2007}.
 Using the matrix $M$ from the proof of \cref{lem_exp_ell_diag}, the claim follows.
\end{proof}

We will now compute the range of
\[
 \omega \mapsto  \delta J(a;b,x_0, \omega ),
 \]
where $\omega$ varies over elliptic shapes as considered in the above results.
Similar considerations are done in \cite{Amstutz2011b} with a different approach and different notation.

\begin{lemma} \label{lem_bnd_rot}
 Let $y,p\in \R^2$ be given. Let $D=\diag (\lambda_1,\lambda_2)$ be a diagonal matrix.
 For $R\in \mathbb R^{2,2}$ define
 \[
  F(R):= p^T R^T D R y.
 \]
Then the range of $F$ is given by
\[
 F( \Otwo) = F(\SOtwo)= \frac{\lambda_1+\lambda_2}2y^Tp + [-1,+1] \cdot \frac{|\lambda_1-\lambda_2|}2  \|y\|_2\|p\|_2,
\]
where $\Otwo=\{ R\in \R^{2,2}: \ R^TR=I_2\}$, $\SOtwo = \{R\in \Otwo: \det(R)=1\}$.
\end{lemma}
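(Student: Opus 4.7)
My plan is to parametrize $SO(2)$ and $O(2)$ explicitly, express $F(R)$ via trigonometric identities, and then read off the range.

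First, I would split the diagonal matrix as $D=\frac{\lambda_1+\lambda_2}{2}I_2+\frac{\lambda_1-\lambda_2}{2}\,\diag(1,-1)$. Since $R^TI_2R=I_2$ for every $R\in\Otwo$, this already peels off the constant term $\frac{\lambda_1+\lambda_2}{2}\,y^Tp$, reducing the problem to computing the range of $G(R):=\frac{\lambda_1-\lambda_2}{2}\,p^TR^T\diag(1,-1)Ry$ over $R\in\Otwo$.

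Next, for $R\in\SOtwo$ I would write $R=R_\theta$ (rotation by $\theta$) and introduce polar coordinates $y=\|y\|_2(\cos\alpha,\sin\alpha)^T$, $p=\|p\|_2(\cos\beta,\sin\beta)^T$. Then $R_\theta y$ and $R_\theta p$ have arguments $\alpha+\theta$ and $\beta+\theta$, and an application of product-to-sum identities gives
\[
p^TR_\theta^T D R_\theta y = \|y\|_2\|p\|_2\Bigl(\tfrac{\lambda_1+\lambda_2}{2}\cos(\alpha-\beta)+\tfrac{\lambda_1-\lambda_2}{2}\cos(\alpha+\beta+2\theta)\Bigr).
\]
Since $\|y\|_2\|p\|_2\cos(\alpha-\beta)=y^Tp$, this matches the splitting above and
\[
F(R_\theta)=\tfrac{\lambda_1+\lambda_2}{2}\,y^Tp+\tfrac{\lambda_1-\lambda_2}{2}\,\|y\|_2\|p\|_2\cos(\alpha+\beta+2\theta).
\]
As $\theta$ ranges over $[0,2\pi)$, $\cos(\alpha+\beta+2\theta)$ sweeps out $[-1,1]$; combined with the symmetry of $[-1,1]$ (which absorbs the sign of $\lambda_1-\lambda_2$), this yields the claimed range for $F(\SOtwo)$.

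Finally, I would upgrade the equality $F(\Otwo)=F(\SOtwo)$. Every $R\in\Otwo\setminus\SOtwo$ can be written $R=R_\theta\sigma$ with $\sigma:=\diag(1,-1)$. Since $\sigma$ commutes with $D$, we compute $R^TDR=\sigma R_\theta^TDR_\theta\sigma$, so $F(R)=(\sigma p)^TR_\theta^TDR_\theta(\sigma y)$, which is the $SO(2)$-value of $F$ applied to the vectors $\sigma y,\sigma p$. Because $\sigma$ preserves both the Euclidean inner product $y^Tp$ and the norms $\|y\|_2,\|p\|_2$, the resulting range coincides with the one computed above, finishing the proof. I do not expect a serious obstacle: the whole argument is a trigonometric identity plus the observation that $\sigma$ commutes with $D$ and is an isometry. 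The only subtlety worth stating carefully is the absolute value around $\lambda_1-\lambda_2$, which is justified by the evenness of $[-1,1]$.
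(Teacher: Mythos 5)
Your proof is correct, and it reaches the result by a genuinely different computation than the paper. The paper first reduces $\Otwo$ to $\SOtwo$ via $F(R)=F(SR)$ with $S=\diag(1,-1)$ (using $SDS=D$), then rotates $p$ onto $e_1$, parametrizes rotations by unit vectors $v$, and writes $F$ as a quadratic form $v^TMv$ whose range over the unit circle is the interval between the eigenvalues of $M$; the eigenvalue computation produces the $\frac{\lambda_1+\lambda_2}{2}y^Tp\pm\frac{|\lambda_1-\lambda_2|}{2}$ endpoints. You instead peel off the isotropic part of $D$ via $D=\frac{\lambda_1+\lambda_2}{2}I_2+\frac{\lambda_1-\lambda_2}{2}\diag(1,-1)$, which immediately isolates the constant term $\frac{\lambda_1+\lambda_2}{2}y^Tp$, and then a product-to-sum identity in polar coordinates exhibits the remaining term as $\frac{\lambda_1-\lambda_2}{2}\|y\|_2\|p\|_2\cos(\alpha+\beta+2\theta)$, from which the interval is read off with no eigenvalue computation. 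Your treatment of the reflections is also slightly different: rather than using $SDS=D$ to show $F(R)=F(SR)$, you push $\sigma$ onto the vectors and invoke that $\sigma$ is an isometry (note that the step $R^TDR=\sigma R_\theta^TDR_\theta\sigma$ for $R=R_\theta\sigma$ is pure substitution and does not actually require that $\sigma$ commutes with $D$; that remark is superfluous but harmless). Both arguments are elementary and complete; yours is arguably more self-contained since it avoids appealing to the Rayleigh-quotient characterization of the range of a quadratic form, while the paper's quadratic-form formulation would generalize more readily beyond $d=2$. The only degenerate case, $y=0$ or $p=0$, where polar coordinates are undefined, is trivial since both sides reduce to $\{0\}$.
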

\begin{proof}
The claim is trivially true if $y=0$ or $p=0$.
Hence, it suffices to consider the case $y\ne0$ and $p\ne0$.
We first prove the claim under the assumption $\|y\|_2=\|p\|_2=1$.
The general case follows from a simple scaling argument.
 Let $R\in \Otwo$ with $\det(R)=-1$ be given, and set $S:=\diag(1,-1)$. Then $SR\in \SOtwo$ and $F(R) = F(SR)$ since $SDS=D$.
 Now, define $R_0$ to be the rotation matrix that rotates $p$ onto the first unit vector $e_1$, i.e.,
 \[
  R_0=\begin{pmatrix}
		p_1 & p_2 \\
		-p_2 & p_1
      \end{pmatrix}.
 \]
 Set $\tilde y:=R_0y$. Then it is enough to compute the range of $\tilde F(R):= F(RR_0)$.
 Given $v\in \R^2$ with $\|v\|_2=1$, we parametrize $R$ as follows
 \[
  R(v):=\begin{pmatrix} v_1 & v_2 \\ -v_2 & v_1\end{pmatrix} = v_1 I_2 + v_2 J, \quad
 J := \begin{pmatrix} 0 & 1 \\-1 & 0 \end{pmatrix}.
\]
Then we get
\[\begin{split}
   \tilde F(R(v))& =e_1^T (v_1 I_2 + v_2 J)^T D (v_1 I_2 + v_2 J) \tilde  y \\
   &=v^T \begin{pmatrix}
     \lambda_1\tilde y_1 & \frac{\lambda_1-\lambda_2}2 \tilde y_2\\
     \frac{\lambda_1-\lambda_2}2 \tilde y_2  & \lambda_2 \tilde y_1
    \end{pmatrix} v.
  \end{split} =: v^TMv
\]
Hence, the range of $v \mapsto F(R(v))$ is equal to the interval determined by the eigenvalues of the matrix $M$.
    A short calulation shows that the eigenvalues of $M$ are given by
    \[
       t_{1,2}:=  \frac{\lambda_1+\lambda_2}2 \tilde y_1 \pm \frac{|\lambda_1-\lambda_2|}2.
    \]
Since $\tilde y_1=y^Tp$, the claim follows.
\end{proof}

\begin{lemma}\label{lem_range_of_Gfunction}
 Let $y,p\in \R^2$ and $a,b>0$ be given.
 For $R\in \mathbb R^{2,2}$ and $\lambda>0$ define
 \[
  G(R,\lambda):=
  p^T R^T
   \begin{pmatrix}
	\frac{ \lambda+1}{a \lambda + b}& 0 \\
      0& \frac{ \lambda+1}{a  + b \lambda}\\
     \end{pmatrix} R y.
     \]
  Then
  \[
  \operatorname{cl}
   G( \Otwo,\ \R^+) = \frac12\left(\frac1a + \frac1b\right) y^Tp + [-1,+1] \cdot  \frac12 \left(\frac1a-\frac1b\right) \cdot \|y\|_2\|p\|_2.
  \]
\end{lemma}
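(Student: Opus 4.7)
The plan is to apply \cref{lem_bnd_rot} fiberwise in $\lambda$, and then to take the union over $\lambda>0$ by exploiting a hidden scalar constraint between the two diagonal entries, which reduces the problem to a one-parameter analysis.

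First I would fix $\lambda>0$, abbreviate $\mu_1:=\frac{\lambda+1}{a\lambda+b}$ and $\mu_2:=\frac{\lambda+1}{a+b\lambda}$, so that $G(R,\lambda)=p^T R^T\diag(\mu_1,\mu_2) R y$. Applying \cref{lem_bnd_rot} to $D=\diag(\mu_1,\mu_2)$ yields
\[
\{G(R,\lambda):R\in\Otwo\} = \frac{\mu_1+\mu_2}{2}\,y^T p + [-1,1]\cdot\frac{|\mu_1-\mu_2|}{2}\,\|y\|_2\|p\|_2.
\]
The key algebraic observation is the identity
\[
\frac{1}{\mu_1}+\frac{1}{\mu_2}=\frac{a\lambda+b}{\lambda+1}+\frac{a+b\lambda}{\lambda+1}=a+b,
\]
which reveals that $(\mu_1,\mu_2)$ lies on a curve determined by the single scalar $s:=\mu_1+\mu_2$ via $\mu_1\mu_2=s/(a+b)$, and hence $(\mu_1-\mu_2)^2=s(s-k)$ with $k:=4/(a+b)$. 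Setting $\alpha:=y^T p$ and $\beta:=\|y\|_2\|p\|_2$, the above interval depends on $\lambda$ only through $s$ and equals $[\phi_-(s),\phi_+(s)]$ with
\[
\phi_\pm(s):=\frac{\alpha s}{2}\pm\frac{\beta}{2}\sqrt{s(s-k)}.
\]

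Next I would analyse the range of $s=s(\lambda)$. A short computation of $s'(\lambda)$ shows its sign agrees with that of $\lambda-1$, so $s$ attains its minimum $k$ at $\lambda=1$ and $s(\lambda)\to s_*:=\frac{1}{a}+\frac{1}{b}$ as $\lambda\to 0^+$ or $\lambda\to\infty$; hence $s((0,\infty))=[k,s_*)$. The crucial step is then to prove the strict monotonicities $\phi_+'(s)>0$ and $\phi_-'(s)<0$ on $(k,s_*]$. Using Cauchy-Schwarz $|\alpha|\le\beta$ together with the elementary identity $(s-k/2)^2-s(s-k)=k^2/4$, which gives $(s-k/2)/\sqrt{s(s-k)}>1$ for $s>k$, one checks these inequalities for both signs of $\alpha$. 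Consequently the intervals $[\phi_-(s),\phi_+(s)]$ are strictly nested and grow as $s$ increases.

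A direct substitution $s=s_*$ yields $s_*(s_*-k)=(a-b)^2/(ab)^2=(\frac{1}{a}-\frac{1}{b})^2$, so $\phi_\pm(s_*)$ recover exactly the endpoints of the claimed set. By continuity and strict monotonicity of $\phi_\pm$ together with the fact that $s_*$ is approached but not attained, the nested union $\bigcup_{s\in[k,s_*)}[\phi_-(s),\phi_+(s)]$ equals the open interval $(\phi_-(s_*),\phi_+(s_*))$; its closure is the claimed closed interval. The main obstacle I anticipate is the monotonicity of $\phi_\pm$: it must be handled for both signs of $\alpha=y^T p$, and the combination of Cauchy-Schwarz with the square-root identity $(s-k/2)/\sqrt{s(s-k)}>1$ is precisely what rules out a sign change of $\phi_\pm'$.
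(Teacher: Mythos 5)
Your argument is correct, and while it shares the paper's first step --- applying \cref{lem_bnd_rot} fiberwise to reduce each $\lambda$-slice to the interval with midpoint $\tfrac{\mu_1+\mu_2}{2}y^Tp$ and half-width $\tfrac{|\mu_1-\mu_2|}{2}\|y\|_2\|p\|_2$ --- it handles the subsequent optimization over $\lambda$ by a genuinely different route. The paper works with $\lambda_1(\lambda)$ and $\lambda_2(\lambda)$ directly: it uses their individual monotonicity (one increasing, one decreasing for $b>a$), the symmetry $\lambda\mapsto\lambda^{-1}$, and an asymmetric treatment of the two endpoints, computing the supremum from the fact that both the midpoint and the half-width are maximized at $\lambda\to0$, and the infimum from the sign decomposition $\lambda_1(y^Tp-1)+\lambda_2(y^Tp+1)$. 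Your observation that $\mu_1^{-1}+\mu_2^{-1}=a+b$ is not in the paper and is the real gain of your approach: it shows the pair $(\mu_1,\mu_2)$ is constrained to a curve parametrized by $s=\mu_1+\mu_2$ alone, so that $(\mu_1-\mu_2)^2=s(s-4/(a+b))$ and both interval endpoints become explicit functions $\phi_\pm(s)$ of one well-chosen scalar. The monotonicity of $\phi_\pm$ (via $|y^Tp|\le\|y\|_2\|p\|_2$ and $(s-k/2)/\sqrt{s(s-k)}>1$) then yields sup and inf in one symmetric stroke, and the endpoint evaluation $s_*(s_*-k)=(\tfrac1a-\tfrac1b)^2$ recovers the claimed interval. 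This is structurally cleaner and treats both endpoints uniformly, at the price of the slightly less transparent reparametrization; the paper's version stays closer to the geometry of the ellipse parameter $\lambda$. Two cosmetic points you should make explicit in a final write-up: the degenerate cases $\|y\|_2\|p\|_2=0$ and $a=b$ (where $k=s_*$ and everything collapses to a point, consistent with the claim), and the fact that for the closure it is irrelevant whether the nested union is exactly the open interval --- containment in the closed interval plus accumulation at both endpoints suffices.
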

\begin{proof}
As in the proof of the previous result, \cref{lem_bnd_rot}, it is enough to consider the case $\|y\|_2=\|p\|_2=1$.
In addition, we can assume $b\ge a$. In case $b<a$, we can consider the function $G(R,\lambda^{-1})$, which is equal to $G(R,\lambda)$ but with the roles of $a,b$ exchanged.
 Define
 \[
  \lambda_1(\lambda) := \frac{ \lambda+1}{a \lambda + b },
  \ \lambda_2(\lambda) := \frac{ \lambda+1}{a  + b \lambda}.
 \]
Note that $\lambda_2(\lambda) = \lambda_1(\lambda^{-1})$.
Since $b>a$ it follows that $\lambda_1$ is monotonically increasing, and $\lambda_2$ is monotonically decreasing.
%
This implies that $|\lambda_1(\lambda)-\lambda_2(\lambda)|$ is maximal at $\lambda=0$ and $\lambda\to \infty$,
with maximum $\frac1a-\frac1b$.

In addition, we find
\[
\lambda_1(\lambda) + \lambda_2(\lambda) = (a+b) \frac{ (\lambda+1)^2 }{ ab\lambda^2 +(a^2+b^2)\lambda + ab}
 \]
with derivative
\[
 \frac\d{\d\lambda}( \lambda_1(\lambda) + \lambda_2(\lambda) ) = (a+b) (a - b)^2 \frac{\lambda^2-1 }{(a \lambda + b)^2 (a + b \lambda)^2}.
\]
Hence, the minimum and maximum of $\lambda_1(\lambda) + \lambda_2(\lambda)$ are attained at $\lambda=1$ and $\lambda=0$, respectively,
which means
\begin{equation}\label{eq_l1pl2}
\lambda_1(1) + \lambda_2(1) =\frac4{a+b}\le \lambda_1(\lambda) + \lambda_2(\lambda) \le \frac1a+\frac1b = \lambda_1(0) + \lambda_2(0) .
\end{equation}
By \cref{lem_bnd_rot}, we have
\begin{equation}\label{eq_GO2l_bnd}
G(\Otwo,\lambda) = \frac{\lambda_1(\lambda)+\lambda_2(\lambda)}2y^Tp + \left[ -\frac{|\lambda_1(\lambda)-\lambda_2(\lambda)|}2,\, +\frac{|\lambda_1(\lambda)-\lambda_2(\lambda)|}2\right].
\end{equation}
Let us only consider the case $y^Tp\ge0$, the case $y^Tp<0$ can be proven by a simple change of sign.
Then using that the supremum of both addends is attained at $\lambda=0$, see \eqref{eq_l1pl2}, we get
\[\begin{split}
\sup_{\lambda>0} \sup (G(\Otwo,\lambda)) & = \sup_{\lambda>0} \frac{\lambda_1(\lambda)+\lambda_2(\lambda)}2y^Tp +\frac{|\lambda_1(\lambda)-\lambda_2(\lambda)|}2 \\
&=  \frac{(\lambda_1(0)+\lambda_2(0))}2y^Tp +\frac{|\lambda_1(0)-\lambda_2(0)|}2 \\
&=  \frac12\left(\frac1a + \frac1b\right) y^Tp + \frac12\left(\frac1a - \frac1b\right).
\end{split}
\]
To compute the infimum, we observe that the lower bound in \eqref{eq_GO2l_bnd} is invariant under the transform $\lambda\mapsto \lambda^{-1}$.
Hence, it is sufficient to consider $\lambda\ge1$.
Here, we find
\[\begin{split}
 \inf_{\lambda>0} \inf (G(\Otwo,\lambda)) &= \inf_{\lambda\ge1} \inf (G(\Otwo,\lambda)) \\
 &= \inf_{\lambda\ge1} \frac{\lambda_1(\lambda)+\lambda_2(\lambda)}2y^Tp -\frac{\lambda_1(\lambda)-\lambda_2(\lambda)}2\\
 &= \frac12\inf_{\lambda\ge1} \Big( \lambda_1(\lambda)  \underbrace{ (y^Tp -1) }_{\le 0} + \lambda_2(\lambda) \underbrace{ (y^Tp +1) }_{\ge0}  \Big) \\
 & = \frac12\lim_{\lambda\to\infty} \big( \lambda_1(\lambda)   (y^Tp -1) + \lambda_2(\lambda) (y^Tp +1) \big) \\
 &= \frac12\left(\frac1a + \frac1b\right) y^Tp - \frac12\left(\frac1a - \frac1b\right)
\end{split}\]
due to the monotonicity properties of $\lambda_1$ and $\lambda_2$.
And the claim is proven.
\end{proof}

Using these results, we can
compute the range of possible values of the topological
derivative when
the shape of the perturbation $\omega$ varies over all possible ellipses.
The infimum in the next result will be useful for necessary optimality conditions.

\begin{theorem}\label{thm_exp_2d}
Let $a(x_0),b\in \mathcal M$ be multiples of the identity matrix.
Let us define
\[\begin{split}
 \mathcal H &:= \{ H \in \R^{2,2} \text{ positive definite  with } \det H=1\}\\
 \omega(H)&:= \{ x: \ x^THx \le 1\}.
\end{split}\]
Then for almost all $x_0\in \Omega$,
the closure of the range of $\delta J$ with respect to variations of ellipses $\omega$ is given by
\begin{multline*}
 \operatorname{cl}
\{  \delta J(a;b,x_0, \omega(H) ) : H\in \mathcal H\} \\
\begin{aligned}
 =&-(b-a(x_0)) \nabla y(x_0) \cdot \nabla p(x_0) \\
 &+ \frac12 \frac{ (b-a(x_0))^2}b \left([-1,+1]\cdot \|\nabla y(x_0)\|_2\|\nabla p(x_0)\|_2 + \nabla y(x_0) \cdot \nabla p(x_0)\right)
.
 \end{aligned}
 \end{multline*}
In particular,
\begin{multline*}
 \inf
\{  \delta J(a;b,x_0, \omega(H) ) : H\in \mathcal H\} \\
\begin{aligned}
 =&-(b-a(x_0)) \nabla y(x_0) \cdot \nabla p(x_0) \\
 &+ \frac12 \frac{ (b-a(x_0))^2}b \left(   \nabla y(x_0) \cdot \nabla p(x_0)- \|\nabla y(x_0)\|_2\|\nabla p(x_0)\|_2\right).
\end{aligned}
\end{multline*}
\end{theorem}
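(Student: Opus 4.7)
My plan is to reduce the theorem to a straightforward substitution into \cref{lem_range_of_Gfunction}, combined with a short algebraic simplification. First I would note that any $H\in\mathcal H$ admits an eigendecomposition $H = R^T\operatorname{diag}(\lambda,\lambda^{-1})R$ with $R\in\Otwo$ and $\lambda>0$, so the parameter set $\mathcal H$ is exactly in bijection with $\Otwo\times\R^+$ (modulo the obvious redundancy). By \cref{cor_exp_ell_general}, for every such $H$
\[
 \delta J(a;b,x_0,\omega(H)) = -(b-a(x_0))\,a(x_0)\, G(R,\lambda),
\]
where $G$ is the function defined in \cref{lem_range_of_Gfunction} evaluated with $y=\nabla y(x_0)$, $p=\nabla p(x_0)$, and scalars $a=a(x_0)$, $b=b$. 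Because $-(b-a(x_0))\,a(x_0)$ is a fixed scalar, the closure of the range on the left is obtained by multiplying the closure of $G(\Otwo,\R^+)$ (computed in \cref{lem_range_of_Gfunction}) by this scalar.

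Next I would substitute the formula from \cref{lem_range_of_Gfunction} and simplify. Writing $a:=a(x_0)$, the closure becomes
\[
 -(b-a)\,a \cdot\left( \tfrac12\bigl(\tfrac1a+\tfrac1b\bigr) \nabla y(x_0)\cdot\nabla p(x_0) + [-1,+1]\cdot \tfrac12\bigl(\tfrac1a-\tfrac1b\bigr)\|\nabla y(x_0)\|_2\|\nabla p(x_0)\|_2\right).
\]
The central term collapses to $-\tfrac{(b-a)(a+b)}{2b}\nabla y(x_0)\cdot\nabla p(x_0)$, and using the identity
\[
 -\tfrac{(b-a)(a+b)}{2b} = -(b-a) + \tfrac{(b-a)^2}{2b},
\]
this gives the first two pieces in the claimed formula. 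The symmetric interval term becomes $[-1,+1]\cdot \tfrac{(b-a)^2}{2b}\|\nabla y(x_0)\|_2\|\nabla p(x_0)\|_2$ (note that $-(b-a)a\cdot\tfrac{1}{2}(\tfrac1a-\tfrac1b) = -\tfrac{(b-a)^2}{2b}$, and the minus sign is absorbed by the symmetric interval $[-1,+1]$). Collecting terms yields exactly the closure stated in \cref{thm_exp_2d}.

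For the infimum formula, I would observe that $\tfrac{(b-a(x_0))^2}{2b}\geq 0$ since $b\geq\alpha>0$. Therefore the minimum over the symmetric interval $[-1,+1]\cdot\|\nabla y(x_0)\|_2\|\nabla p(x_0)\|_2$ is attained at $-\|\nabla y(x_0)\|_2\|\nabla p(x_0)\|_2$. Since infimum and closure commute for bounded sets, the infimum of $\delta J(a;b,x_0,\omega(H))$ over $H\in\mathcal H$ equals the infimum of the closure, which gives the stated expression.

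The proof is essentially bookkeeping — the only mild obstacle is keeping track of signs and verifying the algebraic rearrangement $-(b-a)(a+b)/(2b)= -(b-a)+(b-a)^2/(2b)$, which recasts the Corollary~3.3--Lemma~3.5 output into the form advertised in the theorem. All analytic content is already in \cref{cor_exp_ell_general} and \cref{lem_range_of_Gfunction}.
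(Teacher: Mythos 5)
Your reduction and the algebra are exactly what the paper does: pass through \cref{cor_exp_ell_general} to write $\delta J(a;b,x_0,\omega(H))=-(b-a(x_0))\,a(x_0)\,G(R,\lambda)$, invoke \cref{lem_range_of_Gfunction} for $\operatorname{cl}G(\Otwo,\R^+)$, and simplify via $-(b-a)(a+b)/(2b)=-(b-a)+(b-a)^2/(2b)$. I checked the sign bookkeeping and it is correct, as is the remark that the infimum of a set equals the infimum of its closure.

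The one step you gloss over is the quantifier exchange hidden in ``By \cref{cor_exp_ell_general}, for every such $H$.'' That corollary asserts, for each \emph{fixed} $H$, a formula valid for almost all $x_0$, with an exceptional null set that a priori depends on $H$. Since $\mathcal H$ is uncountable, the union of these null sets need not be null, so you cannot immediately conclude that at almost every $x_0$ the formula holds simultaneously for all $H\in\mathcal H$ --- which is what you need before computing the range over $H$ at a fixed $x_0$. The paper resolves this by applying \cref{cor_exp_ell_general} only to the countable dense family of $H\in\mathcal H$ with rational eigenvalues and eigenvectors (so the union of exceptional sets stays null), and then using that taking the closure is insensitive to replacing $\mathcal H$ by a dense subset on which $\delta J$ depends continuously. (Alternatively, one can observe from the proof of \cref{thm_exp_general} that the exceptional set depends only on $b$, $a$, $\nabla y$, $\nabla p$ and not on $\omega$, but that requires opening up the proof rather than citing the statement.) Add one of these two arguments and your proof is complete.
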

\begin{proof}
First, we apply \cref{cor_exp_ell_general} to matrices $H\in \mathcal H$ with rational eigenvalues and eigenvectors.
Such matrices are dense in $\mathcal H$. Then for almost all $x_0\in \Omega$
it holds
\[
 \operatorname{cl}
\{  \delta J(a;b,x_0, \omega(H) ) : H\in \mathcal H\}
 = -(b-a(x_0))  a(x_0)
  \operatorname{cl}
   G( \Otwo,\ \R^+, x_0),
\]
where $G$ is defined by
\[
  G(R,\lambda,x_0):=
  \nabla p(x_0)^T R^T
   \begin{pmatrix}
	\frac{ \lambda+1}{a(x_0) \lambda + b}& 0 \\
      0& \frac{ \lambda+1}{a(x_0)  + b \lambda}\\
     \end{pmatrix} R \nabla y(x_0).
\]
Due to \cref{lem_range_of_Gfunction}, we get
\begin{multline*}
\operatorname{cl}
   G( \Otwo,\ \R^+, x_0) =\frac12\left(\frac1{a(x_0)} + \frac1b\right) \nabla y(x_0) \cdot \nabla p(x_0)
   \\
   + [-1,+1] \cdot  \frac12 \left(\frac1{a(x_0)}-\frac1b\right) \cdot \|\nabla y(x_0)\|_2\|\nabla p(x_0)\|_2.
\end{multline*}
The claim follows now from elementary computations.
\end{proof}

Let us briefly show that it suffices to study elliptic inclusions.
Here, we will use the results of \cite{CapdeboscqVogelius2004}.
The result of \cref{lem_exp_ell_diag} can be written as
\begin{equation} \label{eq_sens_with_bounds}
 \delta J(a;b,x_0, \omega ) = -(b-a(x_0))  \nabla p(x_0) \cdot M \nabla y(x_0),
\end{equation}
where $M$ is a matrix that depends on $\omega$, $a(x_0)$, $b$.
Note that \eqref{eq_sens_with_bounds} does not contain the factor $a(x_0)$, which is present in the estimate of \cref{cor_exp_ell_general}.
As shown in \cite{CapdeboscqVogelius2004},
for every symmetric matrix $M\in \R^{2,2}$ with eigenvalues in the set
\[
\left\{ ( \lambda_1,\lambda_2): \ \lambda_1 + \lambda_2 \le 1+ \frac{ a(x_0) }b, \quad  \frac1{\lambda_1} + \frac1{\lambda_2} \le 1+ \frac b{ a(x_0) }\right\}
\]
there is an inclusion $\omega$ such that \eqref{eq_sens_with_bounds} is valid.
The bound on the inverses on the eigenvalues is attained for ellipses.
The bound on $\lambda_1 + \lambda_2$ is attained as the limit for differences of ellipses.
As shown in \cite{CapdeboscqVogelius2004}, this set of matrices can be parametrized as
\[
   \begin{pmatrix}
	\frac{ b + a \lambda}{b(1+\lambda)}& 0 \\
      0& \frac{a + b \lambda}{b(1+\lambda)}\\
     \end{pmatrix}, \quad \lambda>0.
\]
When optimizing over the latter set of matrices, we get the same bounds on the range.
Note that we need to take account for the missing factor $a(x_0)$ in \eqref{eq_sens_with_bounds} when compared to \cref{cor_exp_ell_general}.

\begin{lemma}\label{lem_range_of_Gfunction_washers}
 Let $y,p\in \R^2$ and $a,b>0$ be given.
 For $R\in \mathbb R^{2,2}$ and $\lambda>0$ define
 \[
  G(R,\lambda):=
  p^T R^T
   \begin{pmatrix}
	\frac{ b + a \lambda}{ab(1+\lambda)}& 0 \\
      0& \frac{a + b \lambda}{ab(1+\lambda)}\\
     \end{pmatrix} R y.
     \]
  Then
  \[
  \operatorname{cl}
   G( \Otwo,\ \R^+) = \frac12\left(\frac1a + \frac1b\right) y^Tp + [-1,+1] \cdot  \frac12 \left(\frac1a-\frac1b\right) \cdot \|y\|_2\|p\|_2.
  \]
\end{lemma}
\begin{proof}
 Let $\lambda_1(\lambda) :=\frac{ b + a \lambda}{ab(1+\lambda)}$ and $\lambda_2(\lambda):= \frac{a + b \lambda}{ab(1+\lambda)}$.
 Using \cref{lem_bnd_rot}, we get
\[
  \operatorname{cl}
   G( \Otwo,\ \lambda)= \frac{\lambda_1(\lambda)+\lambda_2(\lambda)}2y^Tp + [-1,+1] \cdot \frac{|\lambda_1(\lambda)-\lambda_2(\lambda)|}2  \|y\|_2\|p\|_2.
\]
Here, we have $\lambda_1(\lambda)+\lambda_2(\lambda) = \frac1a + \frac1b$.
In addition, we have $|\lambda_1(\lambda)-\lambda_2(\lambda)| \le \frac{ |a-b|}{ab} $  with $\lim_{\lambda\searrow0} |\lambda_1(\lambda)-\lambda_2(\lambda)| = \frac{ |a-b|}{ab} $,
which proves the claim.
\end{proof}

\section{Control in the coefficients}
\label{sec_pmp}

In this section, we will apply the results on the $\delta J$ to derive the Pontryagin maximum principle for the following
optimization problem: Minimize
\begin{equation}\label{eq_def_ctrl}
\tilde  J(y,a) := \frac12 \|y-y_d\|_{L^2(\Omega)}^2 + \int_\Omega g( a(x)) \dx,
\end{equation}
subject to $a\in \mathcal A$, and $y\in H^1_0(\Omega)$ solves
\[
 \int_\Omega a \nabla y \cdot \nabla v \dx = \int_\Omega f v\dx \quad \forall v\in H^1_0(\Omega).
\]
This is a classical problem, and lead to the study of H-convergence, \cite{MuratTartar1997}.
Solutions do not exist in general.
The data of the problem is assumed to satisfy the basic \cref{ass_basic}.
In addition, $g:\R^{d,d} \to \R \cup \{+\infty\}$ is lower semi-continuous, which implies that $g(a)$ is measurable for all $a\in \mathcal A$.

We will formulate necessary optimality conditions in terms of the Pontryagin maximum principle.
Using the results from the previous sections, we find that the maximum principle holds in the following form.

\begin{theorem}\label{thm_pmp}
 Let $a$ be a local solution of \eqref{eq_def_ctrl} with respect to the $L^1(\Omega)$-norm.
 Let $y$ and $p$ be the corresponding solution of the state equation \eqref{eq_def_y} and \eqref{eq_def_p}.
 Let $\omega \subset \R^d$ be open and bounded with $0\in \omega$.
 Then for almost all $x_0\in \Omega$ and all $b\in \mathcal M$ (see \eqref{eq_def_M}) it holds
 \[
 -(b-a(x_0)) \nabla y(x_0) \cdot \left(\nabla p(x_0) + \frac1{|\omega|} \int_\omega \nabla_{x'} Q \dx' \right)
 +
  g(b) - g(a(x_0)) \ge 0,
 \]
where $Q\in \dot H^1(\R^d)$ is the solution of \eqref{eq_Q}.
\end{theorem}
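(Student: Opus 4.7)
The plan is to test the local optimality of $a$ against the specific perturbations $a_r = a + \chi_{x_0 + r\omega}(b-a)$ and let $r \searrow 0$. First I would verify admissibility: since $a(x) \in \mathcal M$ a.e.\ and $b \in \mathcal M$, the convex combination $a_r(x)$ lies in $\mathcal M$ a.e., hence $a_r \in \mathcal A$. Moreover,
\[
\|a_r - a\|_{L^1(\Omega)} \le \|b - a\|_{L^\infty(\Omega; \R^{d,d})}\, r^d |\omega| \to 0,
\]
so $a_r$ lies in the $L^1$-neighborhood of $a$ on which the local optimality of $a$ applies. Consequently, $\tilde J(y_r, a_r) \ge \tilde J(y, a)$ for all sufficiently small $r > 0$.

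Next, I split the increment into the state-dependent and coefficient-dependent parts. Since $a_r = a$ on $\Omega \setminus (x_0 + r\omega)$ and $a_r = b$ on $x_0 + r\omega$,
\[
\tilde J(y_r, a_r) - \tilde J(y, a) = \bigl(J(a_r) - J(a)\bigr) + \int_{x_0 + r\omega}\bigl(g(b) - g(a(x))\bigr)\dx.
\]
If $g(b) = +\infty$, the asserted inequality is trivial, so I may assume $g(b) < \infty$. Local optimality yields $\tilde J(y,a) < \infty$, so $g(a(\cdot)) \in L^1(\Omega)$. Dividing by $r^d|\omega|$ and letting $r \searrow 0$, Theorem~\ref{thm_exp_general} handles the state part, while Corollary~\ref{cor_lebesgue_lp_general} applied to $g(a(\cdot))$ yields
\[
\frac{1}{r^d|\omega|}\int_{x_0 + r\omega}\bigl(g(b) - g(a(x))\bigr)\dx \to g(b) - g(a(x_0))
\]
at every $1$-Lebesgue point of $g(a(\cdot))$. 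Combining these limits at a common point of validity gives the claimed inequality for the fixed $b$.

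The main obstacle is the order of quantifiers: the statement asserts that a single null set $N \subset \Omega$ works for \emph{all} $b \in \mathcal M$ simultaneously, whereas the argument above produces, for each $b$, its own null set of exceptional points. To synchronize, I would fix a countable dense subset $\{b_k\} \subset \mathcal M$ (recall $\mathcal M \subset \R^{d,d}$ is separable), and let $N$ be the countable union of the individual null sets associated with the $b_k$'s together with the null set outside of which Theorem~\ref{thm_exp_general} fails and $g(a(\cdot))$ lacks a Lebesgue point. For $x_0 \notin N$ the inequality then holds simultaneously for every $b_k$ with $g(b_k) < \infty$.

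Finally, I would pass to an arbitrary $b \in \mathcal M$ by approximation. The solution $Q = Q(b)$ of \eqref{eq_Q} depends continuously on $b$ in $\dot H^1(\R^d)$ by Lax--Milgram stability (since $\tilde a$ and the right-hand side depend linearly/continuously on $b$), so the left-hand side bracket of the inequality is continuous in $b \in \mathcal M$. Picking $b_k \to b$ with $b_k$ in the dense subset, the bracket passes to the limit and the lower semicontinuity of $g$ yields $\liminf_{k} g(b_k) \ge g(b)$; combined, this preserves the inequality in the limit, completing the proof.
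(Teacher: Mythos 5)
Your overall strategy is the same as the paper's: test local optimality against $a_r=a+\chi_{x_0+r\omega}(b-a)$, split the increment into the tracking part (handled by \cref{thm_exp_general}) and the $\int g$ part (handled by the Lebesgue differentiation theorem), obtain the inequality for a fixed $b$ off a $b$-dependent null set, and then repair the order of quantifiers by a countable-density argument. Up to and including the limit for fixed $b$, your argument is fine (and somewhat more careful than the paper's about admissibility and about $g(a(\cdot))\in L^1$).

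The gap is in the final approximation step. You take a countable dense subset $\{b_k\}$ of $\mathcal M$ and, for arbitrary $b$, pick $b_k\to b$ and invoke lower semicontinuity, $\liminf_k g(b_k)\ge g(b)$. But this inequality points the wrong way: from
\[
A(b_k)+g(b_k)-g(a(x_0))\ge 0,\qquad A(b):=-(b-a(x_0))\nabla y(x_0)\cdot\Bigl(\nabla p(x_0)+\tfrac1{|\omega|}\textstyle\int_\omega\nabla_{x'}Q\dx'\Bigr),
\]
you obtain $A(b)+\liminf_k g(b_k)\ge g(a(x_0))$, and since $g(b)\le\liminf_k g(b_k)$ you \emph{cannot} conclude $A(b)+g(b)\ge g(a(x_0))$: replacing $\liminf_k g(b_k)$ by the possibly strictly smaller value $g(b)$ weakens the left-hand side. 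For a merely lower semicontinuous $g$ the value $g(b)$ can drop strictly below $\liminf_k g(b_k)$ at points $b$ not contained in your dense set, and then the desired inequality at $b$ does not follow. What you need along the approximating sequence is $g(b_k)\to g(b)$ (equivalently $\limsup_k g(b_k)\le g(b)$). This is exactly why the paper chooses a countable dense subset $G$ of the \emph{graph} $\gph g=\{(b,g(b)):b\in\mathcal M\}$ (restricted to the effective domain; the case $g(b)=+\infty$ is trivial): approximating $(b,g(b))$ by elements $(b_k,g(b_k))\in G$ forces convergence of both components, and then your continuity argument for $b\mapsto A(b)$ (via Lax--Milgram stability of $Q(b)$, which the paper leaves implicit and you justify correctly) closes the proof. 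With that one modification your argument coincides with the paper's.
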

\begin{proof}
Fix $b\in \mathcal M$ and $x_0 \in \Omega$. Define $a_r$ as in \eqref{eq_def_ar} and $y_r$ as in \eqref{eq_def_yr}. Then $a_r \to a $ in $L^1(\Omega)$ for $r\searrow0$.
Hence, $\tilde J(y_r,a_r) - \tilde J(y,a) \ge0$ for all $r>0$ small enough.
Using \cref{thm_exp_general} and \cref{thm_lebesgue_lp}, we get
\[\begin{split}
 0 &\le \lim_{r\searrow0} \frac1{r^d|\omega|}(\tilde J(y_r,a_r) - \tilde J(y,a) ) \\
 &=
 -(b-a(x_0)) \nabla y(x_0) \cdot \left(\nabla p(x_0) + \frac1{|\omega|} \int_\omega \nabla_{x'} Q(x_0,b) \dx' \right)
 + g(b) - g(a(x_0)),
\end{split}\]
where the limit exists for almost all $x_0\in \Omega$.
Here, $Q(x_0,b) \in \dot H^1(\R^d)$ denotes the solution of \eqref{eq_Q_b} for given $b\in \mathcal M$ and $x_0 \in \Omega$.

This argument shows that for each $b\in \mathcal M$ there exists a set of zero measure $N_b$ such that
for all $x \in \Omega \setminus N_b$
\[
  0 \le
 -(b-a(x_0)) \nabla y(x_0) \cdot \left(\nabla p(x_0) + \frac1{|\omega|} \int_\omega \nabla_{x'} Q(x_0,b) \dx' \right)
 + g(b) - g(a(x_0)).
\]
Let $G$ be a  countable and dense subset of $\gph g = \{ (b,g(b)): b \in \mathcal M\}$.
We write $G = \{ (b_k, g(b_k)): \ k \in \N\}$.
Then  for all $k$ there exists a set of zero measure $N_k$
such that for all $x \in \Omega \setminus N_k$
\[
  0 \le
 -(b_k-a(x_0)) \nabla y(x_0) \cdot \left(\nabla p(x_0) + \frac1{|\omega|} \int_\omega \nabla_{x'} Q(x_0,b_k) \dx' \right)
 + g(b_k) - g(a(x_0)).
\]
Let $N:= \bigcup_{k=1}^\infty N_k$, which has zero measure.
Then for all $x\in \Omega \setminus N$ and all $k$ the inequality
\[
  0 \le
 -(b_k-a(x_0)) \nabla y(x_0) \cdot \left(\nabla p(x_0) + \frac1{|\omega|} \int_\omega \nabla_{x'} Q(x_0,b_k) \dx' \right)
 + g(b_k) - g(a(x_0))
\]
is satisfied.
For fixed $x_0 \in \Omega$
the mapping $ b \mapsto Q(x_0,b)$ is continuous from $\mathcal M$ to $\dot H^1(\R^d)$ by \cref{lem_cont_b_to_Q}.
By density, the desired inequality is satisfied for all $x\in \Omega\setminus N$ and all $b\in \mathcal M$.
\end{proof}

\subsection{The scalar case}

Now let us investigate the case when $a$ is a multiple of the identity.
That is, we will prove necessary optimality condition for the problem
\begin{equation}\label{eq_def_ctrl_scalar}
 \min \tilde J(y,a) \text{ subject to } a(x) \in \mathcal M \cap \operatorname{span}(I_d) \text{ f.a.a. } x.
\end{equation}
With little abuse of notation, we will consider now $a:\Omega \to \R$ and $b\in \R$.
In this section, we will use ball-shaped perturbations.
Using the explicit expression from \cref{thm_exp_balls}, we get the following form of the maximum principle:

\begin{theorem}\label{thm_pmp_scalar}
 Let $a$ be a local solution of \eqref{eq_def_ctrl_scalar} with respect to the $L^1(\Omega)$-norm.
 Let $y$ and $p$ be the corresponding solutions of the state equation \eqref{eq_def_y} and adjoint equation \eqref{eq_def_p}.
 Then for almost all $x_0\in \Omega$ and all $b\ge \alpha$ (see \eqref{eq_def_M}) it holds
 \begin{equation}\label{eq_pmp_scalar}
 -(b-a(x_0))\nabla y(x_0) \cdot \nabla p(x_0) \frac{a(x_0)d}{b+a(x_0)(d-1)}
+  g(b) - g(a(x_0)) \ge 0.
\end{equation}
\end{theorem}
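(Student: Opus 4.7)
The plan is to combine the general Pontryagin maximum principle in \cref{thm_pmp} with the explicit expansion from \cref{thm_exp_balls}. Since the scalar problem \eqref{eq_def_ctrl_scalar} is just the restriction of the general problem to coefficient matrices in $\operatorname{span}(I_d)$, and since the admissible scalar perturbations $b\ge\alpha$ correspond to matrices $bI_d\in\mathcal M$, the local $L^1$-optimality of $a$ in the scalar problem guarantees that every perturbation $a_r:=a+\chi_{x_0+r\omega}(b-a)$ of the type used in \cref{sec_sensitivity} is admissible and comparable for sufficiently small $r$.

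More concretely, I would first choose $\omega:=B(0,1)$ and fix $b\ge\alpha$, $x_0\in\Omega$. As in the proof of \cref{thm_pmp}, local optimality yields $\tilde J(y_r,a_r)-\tilde J(y,a)\ge 0$ for all small $r>0$, since $\|a_r-a\|_{L^1(\Omega)}=|b-a(x_0)|\,r^d|\omega|\to 0$. Dividing by $r^d|\omega|$ and passing to the limit, the state-dependent part converges (for a.e.\ $x_0$) to the expression
\[
-(b-a(x_0))\nabla y(x_0)\cdot\nabla p(x_0)\frac{a(x_0)d}{b+a(x_0)(d-1)}
\]
by \cref{thm_exp_balls}, while the cost part $\frac{1}{r^d|\omega|}\int_{x_0+r\omega}(g(b)-g(a))\,dx$ converges to $g(b)-g(a(x_0))$ by \cref{thm_lebesgue_lp} applied to the integrable function $g\circ a$ (the optimal value being finite implies $g\circ a\in L^1(\Omega)$). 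Adding these two limits gives the claimed inequality at the single pair $(b,x_0)$.

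The only nontrivial step is turning the statement ``for each fixed $b$, a.e.\ $x_0\in\Omega$'' into ``for a.e.\ $x_0$, all $b\ge\alpha$''. Here I would copy the quantifier trick used at the end of the proof of \cref{thm_pmp}: choose a countable dense subset $G$ of $\gph g\cap(\{b\ge\alpha\}\times\R)$ and perform the argument above simultaneously for all $(b,g(b))\in G$, which fails only on a countable union of null sets. For an arbitrary $b\ge\alpha$ with $g(b)<\infty$, pick $(b_n,g(b_n))\in G$ with $b_n\to b$ and $g(b_n)\to g(b)$; the coefficient $\frac{a(x_0)d}{b_n+a(x_0)(d-1)}(b_n-a(x_0))$ is continuous in $b_n\ge\alpha>0$, so the inequality passes to the limit. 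For $b$ with $g(b)=+\infty$ the inequality is trivial.

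The main technical obstacle is this density/continuity argument, because $g$ is only lower semicontinuous and one cannot simply approximate an arbitrary $b$ by rational ones while controlling $g$. The device of taking a dense subset of $\gph g$ (rather than of $\mathcal M$) circumvents this exactly as in \cref{thm_pmp}; I would reuse that trick verbatim. Everything else is a direct substitution of the explicit formula from \cref{lem_explicit_KQ} into \cref{thm_pmp}, and in fact one could equivalently phrase the whole proof as ``apply \cref{thm_pmp} with $\omega=B(0,1)$ and simplify $\nabla p(x_0)+|\omega|^{-1}\int_\omega\nabla_{x'}Q\,dx'$ using \cref{lem_explicit_KQ},'' which is the shortest route.
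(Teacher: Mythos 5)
Your proposal is correct and follows exactly the route the paper takes: its proof of this theorem is the one-line remark that one repeats the argument of \cref{thm_pmp} with $\omega=B(0,1)$ and substitutes the explicit expression for $\delta J$ from \cref{thm_exp_balls}. Your fleshed-out version, including the countable dense subset of $\gph g$ to exchange the quantifiers and the continuity of the coefficient in $b$, is a faithful expansion of that same argument.
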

\begin{proof}
 The proof is similar to that of \cref{thm_pmp} and uses \cref{thm_exp_balls} for the expression of $\delta J$.
\end{proof}

\subsection{The scalar case, $d=2$}

In the two-dimensional case, we have the following expression for the maximum principle associated with \eqref{eq_def_ctrl_scalar},
which is based on the formulas using ellipses.

\begin{theorem}\label{thm_pmp_scalar2d}
 Let $a$ be a local solution of \eqref{eq_def_ctrl_scalar} with respect to the $L^1(\Omega)$-norm.
 Let $y$ and $p$ be the corresponding solutions of the state equation \eqref{eq_def_y} and adjoint equation \eqref{eq_def_p}.
 Then for almost all $x_0\in \Omega$ and all $b\ge \alpha$ it holds
 \begin{multline} \label{eq_pmp_scalar2d}
-(b-a(x_0)) \nabla y(x_0) \cdot \nabla p(x_0)+  g(b) - g(a(x_0)) \\
+ \frac12 \frac{ (b-a(x_0))^2}b \left(   \nabla y(x_0) \cdot \nabla p(x_0)- \|\nabla y(x_0)\|_2\|\nabla p(x_0)\|_2\right)
 \ge 0.
 \end{multline}
\end{theorem}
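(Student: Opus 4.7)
The strategy is to combine the local optimality of $a$ with the stronger sensitivity result \cref{thm_exp_2d}, paralleling the proof of \cref{thm_pmp} but exploiting the freedom to optimize over elliptic perturbation shapes $\omega(H)$. Concretely, fix any $b\ge\alpha$ and any $H\in \mathcal H$, and write $\omega:=\omega(H)$. Given $x_0\in \Omega$, let $a_r$ be the perturbation defined in \eqref{eq_def_ar} with this particular $b$ and $\omega$, and let $y_r$ solve the corresponding state equation. Since $\|a_r-a\|_{L^1(\Omega)}\le r^d|\omega|(|b|+\|a\|_{L^\infty(\Omega)})\to 0$, local $L^1$-optimality of $a$ yields $\tilde J(y_r,a_r)-\tilde J(y,a)\ge 0$ for all sufficiently small $r>0$.

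Dividing by $r^d|\omega|$ and passing to the limit, the state-dependent part of $\tilde J$ is handled by \cref{thm_exp_general}, while the $g$-part is handled by \cref{thm_lebesgue_lp} applied to $\chi_{x_0+r\omega}(g(b)-g(a(\cdot)))$ at any $1$-Lebesgue point of $g\circ a$. This gives, for almost all $x_0\in\Omega$,
\[
0\ \le\ \delta J(a;b,x_0,\omega(H))\ +\ g(b)-g(a(x_0)).
\]
Since the exceptional null set a priori depends on $(b,H)$, I would now use a countable density argument exactly as in the proof of \cref{thm_pmp}: choose a countable dense subset $G\subset \gph g$ and a countable dense family $\mathcal H_0\subset \mathcal H$ of ellipses with rational parameters (as invoked in the proof of \cref{thm_exp_2d}), union the countably many null sets, and obtain a single set of full measure on which the above inequality holds simultaneously for every $b$ with $(b,g(b))\in G$ and every $H\in \mathcal H_0$.

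Taking the infimum over $H\in \mathcal H_0$ and invoking the explicit formula from \cref{thm_exp_2d}, the right-hand side collapses to
\[
g(b)-g(a(x_0))-(b-a(x_0))\nabla y(x_0)\cdot\nabla p(x_0)+\tfrac{1}{2}\tfrac{(b-a(x_0))^2}{b}\bigl(\nabla y(x_0)\cdot\nabla p(x_0)-\|\nabla y(x_0)\|_2\|\nabla p(x_0)\|_2\bigr),
\]
which is exactly \eqref{eq_pmp_scalar2d} for the selected $b$. Finally, I would extend to all $b\ge\alpha$ by the same continuity-plus-lower-semicontinuity reasoning used at the end of the proof of \cref{thm_pmp}: the topological derivative depends rationally (hence continuously) on $b$, so density in $G$ together with lower semicontinuity of $g$ gives the inequality for all admissible $b$ (the case $g(b)=+\infty$ being trivial).

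The main obstacle is the uniform-in-$(b,H)$ handling of Lebesgue null sets: the Lebesgue differentiation theorem only yields a good set for each fixed perturbation, so the proof requires isolating a countable dense family of ellipses and feasible values of $b$, using that \cref{thm_exp_2d} realizes the infimum as a closure over precisely such rationally parametrized ellipses. Once this density bookkeeping is in place, the variational argument itself is a short consequence of the sensitivity results of \cref{sec_sensitivity,sec_specific_perturbations}.
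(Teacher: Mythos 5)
Your argument is correct and is essentially the paper's own route: the paper's proof of this theorem is the single line ``Follows from \cref{thm_exp_2d}'', and what you have written is precisely the intended unpacking --- repeat the perturbation argument of \cref{thm_pmp} for each elliptic shape $\omega(H)$, control the $(b,H)$-dependent exceptional null sets via a countable dense family of rationally parametrized ellipses together with a countable dense subset of $\gph g$, and then pass to the infimum over shapes using the explicit formula of \cref{thm_exp_2d}. Your explicit bookkeeping of the null sets is in fact more careful than what the paper records.
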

\begin{proof}
Follows from \cref{thm_exp_2d}.
\end{proof}

An immediate consequence is the following condition, which implies a subdifferential inclusion.
This is unexpected, as we do not put any convexity conditions on $g$.

\begin{corollary}
 Let $a$ be feasible for \eqref{eq_def_ctrl_scalar}.
 Let $y$ and $p$ be the corresponding solutions of the state equation \eqref{eq_def_y} and adjoint equation \eqref{eq_def_p}.
 Suppose that for almost all $x_0\in \Omega$ and all $b\ge \alpha$ the inequality \eqref{eq_pmp_scalar2d}
 is satisfied.
 Then for almost all $x_0\in \Omega$ it holds
\[
 \nabla y(x_0) \cdot \nabla p(x_0) \in \partial (g + I_{[\alpha,\beta]})(a(x_0)),
\]
where $\partial (g + I_{[\alpha,\beta]})$ denotes the convex subdifferential of the possibly non-convex function $g + I_{[\alpha,\beta]}$,
and $I_{[\alpha,\beta]}$ is the indicator function of $[\alpha,\beta]$.
\end{corollary}

In addition, the conclusion of \cref{thm_pmp_scalar2d} is stronger than that of \cref{thm_pmp_scalar}.
This follows from the optimization considerations in \cref{sec_ellipses}.
Here, we will provide a simple, elementary proof.

\begin{lemma}
\label{lem_sanity_check}
 Let $a,b \ge \alpha$, $y,p\in \R^2$.
Then
\[
  -(b-a) \frac{2a}{b+a} y \cdot p
- \left(
-(b-a)  y \cdot  p
+ \frac12 \frac{ (b-a)^2}b \left(    y \cdot  p- \| y\|_2\| p\|_2\right) \right)
\ge0
\]
\end{lemma}
\begin{proof}
Collecting the factors of $y \cdot p$ in the left-hand side of the claimed inequality gives
\[
  -(b-a) \frac{2a}{b+a}
 - \left(
-(b-a)
+ \frac12  \frac{ (b-a)^2}b
\right)
=\frac{(b-a)^3 }{2b(a+b)}.
\]
Using $|b-a| \le a+b$, we can estimate
\[
 \left| \frac{(b-a)^3 }{2b(a+b)} y\cdot p\right|
 \le \frac{(b-a)^2}{2b} \| y\|_2\| p\|_2,
\]
which proves the claimed inequality.
\end{proof}

\begin{corollary}
 Let $a$ be feasible for \eqref{eq_def_ctrl_scalar}.
 Let $y$ and $p$ be the corresponding solutions of the state equation \eqref{eq_def_y} and adjoint equation \eqref{eq_def_p}.
 Suppose that for almost all $x_0\in \Omega$ and all $b\ge \alpha$ the inequality \eqref{eq_pmp_scalar2d}
 is satisfied.
 Then \eqref{eq_pmp_scalar} is true with $d=2$
 for almost all $x_0\in \Omega$ and all $b\ge \alpha$.
 \end{corollary}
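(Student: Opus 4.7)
The plan is to obtain the ball-based inequality \eqref{eq_pmp_scalar} from the stronger ellipse-based inequality \eqref{eq_pmp_scalar2d} by recognizing the unit ball $B(0,1)$ as one particular elliptic shape $\omega(H)$ over which the infimum in \cref{thm_exp_2d} is taken. The proof then reduces to the trivial observation that an infimum over a family is at most the value at any specific element of that family, combined with the explicit closed-form expressions already derived in the previous sections.

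First, I would check that the identity matrix $I_2$ belongs to the set $\mathcal H$ of \cref{thm_exp_2d}: it is symmetric, positive definite, and $\det I_2 = 1$. Moreover $\omega(I_2) = \{x \in \R^2 : x^T I_2 x \le 1\} = B(0,1)$. Hence, whenever the infimum in \cref{thm_exp_2d} is finite,
\[
  \inf\{\delta J(a;b,x_0,\omega(H)) : H \in \mathcal H\} \;\le\; \delta J(a;b,x_0, B(0,1)).
\]

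Second, I would substitute in the closed-form expressions. By \cref{thm_exp_2d} the infimum on the left equals the left-hand side of \eqref{eq_pmp_scalar2d} minus the term $g(b) - g(a(x_0))$; by \cref{thm_exp_balls} with $d=2$ the value $\delta J(a;b,x_0,B(0,1))$ equals the left-hand side of \eqref{eq_pmp_scalar} minus the same term. Adding $g(b) - g(a(x_0))$ to both sides of the displayed inequality shows that, at a.a.\ $x_0 \in \Omega$ and for every $b \ge \alpha$, the left-hand side of \eqref{eq_pmp_scalar2d} is no larger than the left-hand side of \eqref{eq_pmp_scalar}. Since by hypothesis the former is nonnegative, so is the latter, which is exactly \eqref{eq_pmp_scalar}.

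I do not anticipate any substantive obstacle. The only bookkeeping concern is that the ``almost every $x_0$'' conclusions of \cref{thm_exp_2d} and \cref{thm_exp_balls}, together with the hypothesis, each exclude a Lebesgue-null set; but a union of three null sets is null, so the conclusion still holds at a.a.\ $x_0 \in \Omega$. As an alternative one could bypass the geometric interpretation entirely and verify the desired comparison between the two left-hand sides by direct algebraic manipulation in the two scalars $s := \nabla y(x_0) \cdot \nabla p(x_0)$ and $t := \|\nabla y(x_0)\|_2 \|\nabla p(x_0)\|_2$ using $|s| \le t$, but the infimum argument is both shorter and conceptually more transparent.
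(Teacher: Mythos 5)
Your proof is correct, but it takes a genuinely different route from the paper. The paper argues purely algebraically in two steps: it first discards the non-positive term $\tfrac12\tfrac{(b-a(x_0))^2}{b}\bigl(\nabla y(x_0)\cdot\nabla p(x_0)-\|\nabla y(x_0)\|_2\|\nabla p(x_0)\|_2\bigr)$ from \eqref{eq_pmp_scalar2d} to obtain the intermediate inequality $-(b-a(x_0))\nabla y(x_0)\cdot\nabla p(x_0)+g(b)-g(a(x_0))\ge 0$, and then compares the coefficients of $\nabla y(x_0)\cdot\nabla p(x_0)$ in that inequality and in \eqref{eq_pmp_scalar}, their difference being $\tfrac{(b-a(x_0))^2}{b+a(x_0)(d-1)}\ge0$. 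You instead observe that $I_2\in\mathcal H$ with $\omega(I_2)=B(0,1)$, so that the infimum in \cref{thm_exp_2d} is bounded above by $\delta J(a;b,x_0,B(0,1))$, and then translate both quantities via \cref{thm_exp_2d} and \cref{thm_exp_balls}. Your route buys something concrete: it delivers the pointwise comparison $\text{LHS}\eqref{eq_pmp_scalar}\ge\text{LHS}\eqref{eq_pmp_scalar2d}$ for every admissible $b$ and every sign of $\nabla y(x_0)\cdot\nabla p(x_0)$, whereas the paper's coefficient comparison, read literally, only closes the argument when $\nabla y(x_0)\cdot\nabla p(x_0)\ge0$ (the slack discarded in its first step is exactly what is needed when this inner product is negative; one can also verify your sharper comparison directly from $|\nabla y\cdot\nabla p|\le\|\nabla y\|_2\|\nabla p\|_2$, as you suggest in your closing remark). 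Your handling of the three exceptional null sets is also correct. The only cosmetic point is that $\omega(I_2)$ is the closed unit ball while \cref{thm_exp_balls} uses the open ball; since the two differ by a Lebesgue-null set, the corresponding perturbations $a_r$ and hence the values of $\delta J$ coincide.
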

\begin{proof}
Combining  \cref{lem_sanity_check} with \cref{eq_pmp_scalar2d} proves
\begin{multline*}
 -(b-a(x_0))\nabla y(x_0) \cdot \nabla p(x_0) \frac{2a(x_0)}{b+a(x_0)} \\
 \begin{aligned}
 &\ge
-(b-a(x_0)) \nabla y(x_0) \cdot \nabla p(x_0) \\
&\qquad
 + \frac12 \frac{ (b-a(x_0))^2}b \left(   \nabla y(x_0) \cdot \nabla p(x_0)- \|\nabla y(x_0)\|_2\|\nabla p(x_0)\|_2\right)
\end{aligned}
\\
 \ge -( g(b) - g(a(x_0)) ),
 \end{multline*}
which proves \eqref{eq_pmp_scalar} in the case $d=2$.
\end{proof}

\subsection{Relation to Fr\'echet derivative}

The mapping $a\mapsto y$ is Fr\'echet differentiable from $L^\infty(\Omega)$ to $H^1_0(\Omega)$.
If $a$ is a local minimum of \eqref{eq_def_ctrl_scalar} and $g$ is continuously differentiable,
then $a$ satisfies the necessary optimality condition
\begin{equation}\label{eq_noc_frechet}
 (-\nabla y(x_0) \cdot \nabla p(x_0)+g'(a))(b-a) \ge0 \quad \forall b\ge \alpha.
\end{equation}
Naturally, the results of \cref{thm_pmp_scalar} and \cref{thm_pmp_scalar2d} are stronger:
replacing $b$ by $a(x_0) + t(b-a(x_0))$ in those inequalities, dividing by $t$, and passing to the limit $t\searrow 0$ yields \eqref{eq_noc_frechet}.

\subsection{Example with linear $g$}

Let us consider the following example, which is motivated by material optimization problems, \cite{Amstutz2011b}.
We consider the feasible set given by
\[
\mathcal M := \{ a I_d : \ a\in [\alpha,\beta]\},
\]
where $0<\alpha<\beta$ are real numbers.
In addition, we chose
\[
 g(a) := \ell \cdot a,
\]
where $\ell\in \R$, which could be used to model the cost of materials.
In this simplified situation, we can analyze the optimality conditions \eqref{eq_pmp_scalar2d} and \eqref{eq_noc_frechet}.
Solutions of \eqref{eq_noc_frechet} can be characterized by the implications
\begin{align*}
 \ell > \nabla y(x_0) \cdot \nabla p(x_0) \ \Rightarrow\ a(x_0) = \alpha \ \Rightarrow\ \ell - \nabla y(x_0) \cdot \nabla p(x_0) \ge0, \\
 \ell < \nabla y(x_0) \cdot \nabla p(x_0) \ \Rightarrow\ a(x_0) = \beta  \ \Rightarrow\ \ell - \nabla y(x_0) \cdot \nabla p(x_0) \le0.
\end{align*}
This  should be compared to the characterization of solutions of the condition \eqref{eq_pmp_scalar2d},
which is stated next.

\begin{corollary}
 Let $a$ be feasible for \eqref{eq_def_ctrl_scalar}.
 Let $y$ and $p$ be the corresponding solutions of the state equation \eqref{eq_def_y} and adjoint equation \eqref{eq_def_p}.

 Suppose that for almost all $x_0\in \Omega$ and all
 $b\in[\alpha,\beta]$
 the inequality \eqref{eq_pmp_scalar2d}
 is satisfied.
 Then for almost all $x_0$ the following implications hold:
 \begin{align*}
  \ell = \nabla y(x_0) \cdot \nabla p(x_0) &\ \Rightarrow\ \|\nabla y(x_0)\|_2\|\nabla p(x_0)\|_2 = \nabla y(x_0) \cdot \nabla p(x_0)\\
  \ell > \nabla y(x_0) \cdot \nabla p(x_0) &\ \Rightarrow\ a(x_0) = \alpha \\
  \ell < \nabla y(x_0) \cdot \nabla p(x_0) &\ \Rightarrow\ a(x_0) = \beta.
  \end{align*}
  In addition, if $  a(x_0) = \alpha $ then
  \begin{multline*}
  \ell - \nabla y(x_0) \cdot \nabla p(x_0) \\
  \ge \frac12 \frac{ \beta-\alpha}\beta  (\|\nabla y(x_0)\|_2\|\nabla p(x_0)\|_2-\nabla y(x_0) \cdot \nabla p(x_0)) \ge0.
  \end{multline*}
  If $a(x_0) = \beta$ then
  \begin{multline*}
  \ell - \nabla y(x_0) \cdot \nabla p(x_0) \\
  \le \frac12 \frac{\alpha- \beta}\alpha (\|\nabla y(x_0)\|_2\|\nabla p(x_0)\|_2-\nabla y(x_0) \cdot \nabla p(x_0)) \le0.
  \end{multline*}

\end{corollary}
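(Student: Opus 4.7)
The plan is to substitute $g(b) - g(a(x_0)) = \ell \cdot (b - a(x_0))$ into \eqref{eq_pmp_scalar2d} and to exploit that $s := \nabla y(x_0) \cdot \nabla p(x_0)$ and $N := \|\nabla y(x_0)\|_2 \|\nabla p(x_0)\|_2$ satisfy $N \ge |s|$ by Cauchy-Schwarz, so in particular $s - N \le 0$. Writing $a := a(x_0)$ for brevity and using $b \in [\alpha,\beta]$, the hypothesis takes the compact form
\[
(b-a)(\ell - s) + \frac{(b-a)^2}{2b}(s - N) \ge 0 \qquad \text{for all } b \in [\alpha, \beta].
\]
The coefficient $\frac{1}{2b}(s-N)$ of the quadratic term is non-positive, while the coefficient $(\ell - s)$ of the linear term is free in sign. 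This sign information is what drives everything.

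For the trichotomy I would argue by testing with well-chosen values of $b$. If $\ell = s$, the linear part vanishes and the remaining inequality $\frac{(b-a)^2}{2b}(s - N) \ge 0$ combined with $s - N \le 0$ forces $(b-a)^2 (s - N) = 0$ for every admissible $b$; since $\alpha < \beta$ at least one of $b = \alpha$ or $b = \beta$ satisfies $b \ne a$, giving $s = N$. If $\ell > s$, suppose for contradiction that $a > \alpha$; choosing $b = \alpha$ makes $b - a < 0$, hence $(b-a)(\ell - s) < 0$, while the quadratic term remains $\le 0$, so the left-hand side is strictly negative, a contradiction. Therefore $a = \alpha$. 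The case $\ell < s$ is symmetric, using $b = \beta$ and $a < \beta$.

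For the quantitative estimate, suppose $a = \alpha$ and pick any $b \in (\alpha, \beta]$. Dividing the variational inequality by the positive factor $b - \alpha$ yields
\[
\ell - s \ge \frac{b - \alpha}{2b}(N - s).
\]
Because $N - s \ge 0$ and $\frac{b-\alpha}{2b} = \frac{1}{2} - \frac{\alpha}{2b}$ is increasing in $b$, the strongest estimate is obtained at $b = \beta$, producing $\ell - s \ge \frac{\beta - \alpha}{2\beta}(N-s) \ge 0$ as claimed. The case $a = \beta$ is handled analogously: for $b \in [\alpha,\beta)$ the factor $b - \beta$ is negative, so division flips the sign, and the same monotonicity shows that $b = \alpha$ is the optimal choice, giving $\ell - s \le \frac{\alpha - \beta}{2\alpha}(N-s) \le 0$.

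I do not expect any serious obstacle: the whole argument is a careful case analysis that hinges on the Cauchy-Schwarz sign $s - N \le 0$, which turns the quadratic correction in \eqref{eq_pmp_scalar2d} into a one-signed penalty that never helps to fulfill the variational inequality. The only subtle point is that \eqref{eq_pmp_scalar2d} is not linear in $b$, so one must be careful when arguing "pick $b$ extremal"; the monotonicity of $\frac{b-\alpha}{2b}$ (and $\frac{b-\beta}{2b}$) in $b$ is exactly what makes the extreme choice $b \in \{\alpha,\beta\}$ the tightest one.
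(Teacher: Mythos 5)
Your proof is correct and follows essentially the same route as the paper: both rewrite \eqref{eq_pmp_scalar2d} with $g(b)-g(a(x_0))=\ell(b-a(x_0))$ as the variational inequality $(b-a)(\ell-s)\ge \frac{(b-a)^2}{2b}(N-s)$, use Cauchy--Schwarz to fix the sign of $N-s$, and test with the endpoints $b=\alpha,\beta$ to obtain the trichotomy and the quantitative bounds. The only cosmetic difference is that you derive the endpoint dichotomy by direct contradiction at the opposite endpoint, while the paper first argues $a(x_0)\in\{\alpha,\beta\}$ from the sign change of the left-hand side at $b=a(x_0)$; both are sound.
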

\begin{proof}
Let us define for abbreviation
\[
 s(x_0):=  \nabla y(x_0) \cdot \nabla p(x_0), \quad n(x_0) :=  \|\nabla y(x_0)\|_2\|\nabla p(x_0)\|_2,
\]
which implies $|s| \le n$. Then \eqref{eq_pmp_scalar2d}
is equivalent to
\[
-(b-a(x_0)) s(x_0)+   \ell(b - a(x_0)) \\
+ \frac12 \frac{ (b-a(x_0))^2}b (s(x_0)-n(x_0))
 \ge 0
\]
and
\begin{equation}\label{eq_var_ineq_ell}
(b - a(x_0))(\ell - s(x_0)) \\
\ge \frac12 \frac{ (b-a(x_0))^2}b (n(x_0)-s(x_0)) \quad \forall b\in[\alpha,\beta].
\end{equation}
Now let us assume that \eqref{eq_pmp_scalar2d} is true, and hence the inequality is satisfied for almost all $x_0$.
In case, $\ell - s(x_0) = 0$ it follows $n(x_0)=s(x_0)$.

If $\ell - s(x_0) \ne 0$ it follows $a(x_0) \in \{\alpha,\beta\}$ as the left-hand side of \eqref{eq_var_ineq_ell}
changes sign at $b=a(x_0)$, while the right-hand side is non-negative.
Suppose $a(x_0) = \alpha$. Then \eqref{eq_var_ineq_ell} implies
\[
 \ell - s(x_0) \ge \frac12 \frac{ \beta-\alpha}\beta  (n(x_0)-s(x_0)).
\]
If $a(x_0) = \beta$ we get the reverse inequality
\[
  \ell - s(x_0) \le \frac12 \frac{ \alpha-\beta}\alpha (n(x_0)-s(x_0)).
\]
\end{proof}

\section{Conclusion and outlook}

We developed the Pontryagin maximum principle for control in the coefficients using quite elementary methods.
It would be interesting to consider more complicated settings using general cost functionals  semilinar or quasilinear equations.
Also the case $b=0$ could be considered following \cite{Amstutz2021}.
Another interesting question is, whether the Ekeland variational principle could be used to prove existence of $\epsilon$-solutions of the Pontryagin
maximum principle.
The maximum principle in the $2d$ case was written in terms of a variational inequality \eqref{eq_pmp_scalar2d} of a new type,
whose solution theory is completely open.

\printbibliography

\end{document}